\documentclass{article}
\usepackage[english]{babel}
\usepackage[utf8]{inputenc}
\usepackage{johd}
\usepackage{mathptmx}
\usepackage{amsmath}
\usepackage{mathtools}
\usepackage{verbatim}
\usepackage{appendix}

\numberwithin{equation}{section}

\usepackage{amsmath,amsthm}       
\usepackage{mathrsfs}      
\usepackage{amssymb}       
\usepackage{amsfonts}      

\usepackage{cancel}
\usepackage{indentfirst}


\usepackage{graphicx}
\usepackage{subfigure}
\usepackage{wrapfig}  
\usepackage{multicol}
\usepackage{tikz}     

\usepackage[framemethod=TikZ]{mdframed}
\usepackage{color}
\usepackage{authblk}
\usepackage{textcomp}

\newtheorem{thm}{Theorem}[section]
\newtheorem{lemma}[thm]{Lemma}

\newtheorem{remark}[thm]{Remark}
\newtheorem{prop}[thm]{Proposition}

\author{Yiqing Lin}

\author{Kun Xu\footnote{Corresponding author. Email address: \url{1949101x_k@sjtu.edu.cn} (K. Xu).}}
\affil[1]{\small{School of Mathematical Sciences, Shanghai Jiao Tong University, 200240 Shanghai, China.}}

\title{Mean-field reflected BSDEs driven by a marked point process}

\date{\today}

\begin{document}

\maketitle

\begin{abstract}
In this paper, we study a class of mean-field reflected backward stochastic differential equations (MFRBSDEs) driven by a marked point process. Based on a g-expectation representation lemma, we give the existence and uniqueness of MFRBSDEs driven by a marked point process under Lipschitz generator conditions. Besides, the well-posedness of this kind of BSDEs with exponential growth generator and unbounded terminal is also provided by $\theta$-method.
\end{abstract}

{\bf Keywords:} Mean-field, reflected BSDEs, marked point process.  

\section{Introduction}
Pardoux and Peng \cite{pardoux1990adapted} first introduced backward stochastic differential equations (BSDEs) and obtained the existence and uniqueness for the case of Lipschitz continuous coefficients. From then on, many efforts have been made to relax the conditions on the generators for the well-posedness of adapted solutions. For instance, Kobylanski \cite{kobylanski2000backward} studied quadratic BSDEs for a bounded terminal value $\xi$ via an approximation procedure of the driver. Thereafter, the result was generated by Briand and Hu \cite{briand2006bsde, briand2008quadratic} for unbounded terminal value $\xi$ of some suitable exponential moments. In contrast, Tevzadze \cite{tevzadze2008solvability} proposed a fundamentally different approach by means of a fixed point argument. At the same time, BSDEs have received numerous developments in various fields of PDEs \cite{delbaen2015uniqueness}, mathematical
finance \cite{el1997backward}, stochastic optimal control \cite{yong1999stochastic} and etc. 

Moreover, El Karoui et al. \cite{Karoui1997ReflectedSO} studied reflected backward stochastic differential equations (RBSDEs) in order to solve an obstacle problem for PDEs. The solution $Y$ of a RBSDE is required to be above a given continuous process $L$, i.e.,
$$
L_t \leq Y_t=\xi+\int_t^T f\left(s, Y_s, Z_s\right) d s+K_T-K_t-\int_t^T Z_s d B_s, \quad t \in[0, T],
$$
where the solution $(Y, Z, K)$ satisfies the so-called flat-off condition (or, Skorokhod condition):
$$
\int_0^T\left(Y_t-L_t\right) d K_t=0,
$$
where $K$ is an increasing process. El Karoui et al. \cite{Karoui1997ReflectedSO} proved the solvability of RBSDE with Lipschitz $f$ and square integrable terminal $\xi$.
After that, the result of \cite{Karoui1997ReflectedSO} is generated by  Kobylanski et al. \cite{kobylanski2002reflected} for quadratic RBSDEs with bounded terminal values and bounded obstacles. Lepeltier and Xu \cite{lepeltier2007reflected} constructed the existence of a solution with unbounded terminal values, but still with a bounded obstacle. Bayrakstar and Yao \cite{bayraktar2012quadratic} studied the well-posedness of quadratic RBSDE under unbounded terminal and unbounded obstacles.  The extension to the case of reflected BSDEs with jumps can be found in e.g. \cite{dumitrescu2016reflected,dumitrescu2015optimal,dumitrescu2016generalized,essaky2008reflected,hamadene2003reflected,hamadene2016reflected}.

The generalizations of BSDEs from a Brownian framework to a setting with jumps have aroused a lot of attention. Li and Tang \cite{Tang_1994} and Barles, Buckdahn and Pardoux \cite{barles1997backward} obtained the well-posedness for Lipschitz BSDEs with jumps (BSDEJ). Since then, different kind of BSDEJs have been investigated by many researchers, see \cite{Becherer_2006,morlais2010new,antonelli2016solutions,cohen2015stochastic,kazi2015quadratic,Barrieu_2013,ngoupeyou2010optimisation,jeanblanc2012robust,karoui2016quadratic,kaakai2022utility}.

In particular, a class of exponential growth BSDEs driven by a random measure associated with a marked point process as follows is investigated by many researchers. 
\begin{equation}
Y_t= \xi + \int_t^T f\left(t, Y_s, U_s\right)dA_s-\int_t^T \int_E U_s(e) q(d sd e).
\end{equation}
Here $q$ is a compensated integer random measure corresponding to some marked point process $(T_n,\zeta_n)_{n\ge 0}$, and $A$ is  the dual predictable projection of the event counting process related to the marked point process, which is a continuous and increasing process. The well-posedness of BSDEs driven by general marked point processes were investigated in Confortola \& Fuhrman \cite{Confortola2013} for the weighted-$L^2$ solution, Becherer \cite{Becherer_2006} and Confortola \& Fuhrman \cite{Confortola_2014} for the $L^2$ case,  Confortola,  Fuhrman \& Jacod \cite{Confortola2016} for the $L^1$ case and Confortola \cite{Confortola_2018} for the $L^p$ case. A more general BSDE with both Brownian motion diffusion term and a very general marked point process, which is non-explosive and has totally inaccessible jumps was studied in Foresta \cite{foresta2021optimal}. 

Motivated by the connection to control of McKean-Vlasov equation or mean-field games, see Carmona et al. \cite{carmona2013control} and Acciaio et al. \cite{acciaio2019extended}, mean-field BSDEs were introduced by Buckdahn, Djehiche, Li, Peng \cite{buckdahn2009mean} and Buckdahn, Li, Peng \cite{buckdahn2009mean1}. After that, reflected mean-field BSDEs has been considered by Li \cite{li2014reflected} and Djehiche et al. \cite{djehiche2019mean}. In this paper, we study a kind of mean-field type reflected BSDEs driven by a marked point process, where the mean-field interaction in terms of the distribution of the $Y$-component of
the solution enters both the driver and the obstacle (similar with \cite{djehiche2019mean}):
\begin{equation}
\label{reflected BSDE}
\left\{\begin{array}{l}
Y_t=\xi+\int_t^T f(s, Y_s, U_s, \mathbb{P}_{Y_s}) d A_s +\int_t^T d K_s-\int_t^T \int_E U_s(e) q(d s, d e), \quad 0 \leq t \leq T, \quad \mathbb{P} \text {-a.s. } \\
Y_t \geq h(t, Y_t, \mathbb{P}_{Y_t}), \quad 0 \leq t \leq T,\quad \mathbb{P} \text {-a.s. } \\
\int_0^T\left(Y_{t^{-}}-h\left(t^{-}, Y_{t^{-}}, \mathbb{P}_{Y_{t^{-}}}\right)\right) d K_t=0, \quad \mathbb{P} \text {-a.s. }
\end{array}\right.
\end{equation}

The rest of the paper is organized as follows. In section \ref{section pre}, we introduce some notations and basic results on BSDEs driven by marked point process. In section \ref{section Lip}, we investigate the solvability of (\ref{reflected BSDE}) under Lipschitz conditions on the generator. Section \ref{section exp} is devoted to provide the well-posedness of (\ref{reflected BSDE}) with exponential growth generator and unbounded terminal.

\section{Preliminaries}
\label{section pre}
In this section, we introduce some notions about marked point processes and some basic assumptions. More details about marked point processes can be found in \cite{foresta2021optimal, Bremaud1981, last1995marked, cohen2012existence}. 

In this paper we assume that $(\Omega, \mathscr{F}, \mathbb{P})$ is a complete probability space and $E$ is a Borel space. We call $E$ the mark space and  $\mathscr{E}$ is its Borel $\sigma$-algebra. Given a sequence of random variables $(T_n,\zeta_n)$ taking values in $[0,\infty]\times E$, set $T_0=0$ and $\mathbb P-a.s.$ 
\begin{itemize}
\item $T_n\le T_{n+1},\ \forall n\ge 0;$
\item $T_n<\infty$ implies $T_n<T_{n+1} \ \forall n\ge 0.$
\end{itemize}
The sequence $(T_n,\zeta_n)_{n\ge 0}$ is called a marked point process (MPP). Moreover, we assume the marked point process is non-explosive, i.e., $T_n\to\infty,\ \mathbb P-a.s.$

Define a random discrete measure $p$ on $((0,+\infty) \times E, \mathscr{B}((0,+\infty) \otimes \mathscr{E})$ associated with each MPP:
\begin{equation}
\label{eq p}
    p(\omega, D)=\sum_{n \geq 1} \mathbf{1}_{\left(T_n(\omega), \zeta_n(\omega)\right) \in D} .
\end{equation} 
For each $\tilde C \in \mathscr{E}$, define the counting process $N_t(\tilde C)=p((0, t] \times \tilde C)$ and denote $N_t=N_t(E)$. Obviously, both are right continuous increasing process starting from zero.  Define for $t \geq 0$
$$
\mathscr{G}_t^0=\sigma\left(N_s(\tilde C): s \in[0, t], \tilde C \in \mathscr{E}\right)
$$
and $\mathscr{G}_t=\sigma\left(\mathscr{G}_t^0, \mathscr{N}\right)$, where $\mathscr{N}$ is the family of $\mathbb{P}$-null sets of $\mathscr{F}$. 
Note by $\mathbb{G}=\left(\mathscr{G}_t\right)_{t \geq 0}$ the  completed filtration generated by the MPP, which is right continuous and satisfies the usual hypotheses.  Given a standard Brownian motion $W\in \mathbb R^d$, independent with the MPP, let $\mathbb F=(\mathcal F_t)$ be the completed filtration generated by the MPP and $W$, which satisfies the usual conditions as well.

Each marked point process has a unique compensator $v$, a predictable random measure such that
$$
\mathbb{E}\left[\int_0^{+\infty} \int_E C_t(e) p(d t d e)\right]=\mathbb{E}\left[\int_0^{+\infty} \int_E C_t(e) v(d t d e)\right]
$$
for all $C$ which is non-negative and $\mathscr{P}^{\mathscr{G}} \otimes \mathscr{E}$-measurable, where $\mathscr{P}^{\mathscr{G}}$ is the $\sigma$-algebra generated by $\mathscr{G}$-predictable processes. Moreover, in this paper we always assume that there exists a function $\phi$ on $\Omega \times[0,+\infty) \times \mathscr{E}$ such that $\nu(\omega, d t d e)=\phi_t(\omega, d e) d A_t(\omega)$, where $A$ is the dual predictable projection of $N$. In other words, $A$ is the unique right continuous increasing process with $A_0=0$ such that, for any non-negative predictable process $D$, it holds that,
$$
\mathbb E\left[\int_{0}^\infty D_t dN_t\right]=E\left[\int_{0}^\infty D_t dA_t\right].
$$

Fix a terminal time $T>0$, we can define the integral
$$
\int_0^T \int_E C_t(e) q(d t d e)=\int_0^T \int_E C_t(e) p(d t d e)-\int_0^T \int_E C_t(e) \phi_t(d e) d A_t,
$$
under the condition
$$
\mathbb{E}\left[\int_0^T \int_E\left|C_t(e)\right| \phi_t(d e) d A_t\right]<\infty .
$$
Indeed, the process $\int_0^{\cdot} \int_E C_t(e) q(d t d e)$ is a martingale. Note that  $\int_a^b$ denotes an integral on $(a, b]$ if $b<\infty$, or on $(a, b)$ if $b=\infty$.

The following spaces are frequently called in the sequel.
\begin{itemize}
\item $\mathbb{L}^0$ denotes the space of all real-valued, $\mathcal{G}_T$-measurable random variables.
\item $\mathbb{L}^p :=\left\{\xi \in \mathbb{L}^0 :\|\xi\|_p :=\left\{E\left[|\xi|^p\right]\right\}^{\frac{1}{p}}<\infty\right\}$, for all $p \in[1, \infty)$.
\item $\mathbb{L}^{\infty}:=\left\{\xi \in \mathbb{L}^0:\|\xi\|_{\infty} := {esssup}_{\omega \in \Omega}|\xi(\omega)|<\infty\right\}$.
\item  $\mathbb{C}^0$ denotes the set of all real-valued continuous  process adapted to $\mathbb{G}$ on $[0, T]$. 
\item Let $\mathbb{K}$ be the subset of $\mathbb{C}^0$ that consists of all real-valued increasing and continuous adapted process starting from 0, and $\mathbb K^p$ is a subset of $\mathbb{K}$ such that for each $X\in \mathbb K^p$, $X_T\in \mathbb{L}^p$.
\item  ${S}^0$ denotes the set of real-valued, adapted and c\`adl\`ag processes $\left\{Y_t\right\}_{t \in[0, T]}$. 
\item For any $\left\{\ell_t\right\}_{t \in[0, T]} \in  S^0$, define $\ell_*^{ \pm} := \sup _{t \in[0, T]}\left(\ell_t\right)^{ \pm}$. Then
$$
\ell_* := \sup _{t \in[0, T]}\left|\ell_t\right|=\sup _{t \in[0, T]}\left(\left(\ell_t\right)^{-} \vee\left(\ell_t\right)^{+}\right)=\sup _{t \in[0, T]}\left(\ell_t\right)^{-} \vee \sup _{t \in[0, T]}\left(\ell_t\right)^{+}=\ell_*^{-} \vee \ell_*^{+} .
$$
\item For any real $p \geq 1,\ {S}^p$ denotes the set of real-valued, adapted and c\`adl\`ag processes $\left\{Y_t\right\}_{t \in[0, T]}$ such that,
$$
\|Y\|_{{S}^p}:=\mathbb{E}\left[\sup _{0 \leq t \leq T}\left|Y_t\right|^p\right]^{1 / p}<+\infty.
$$
Then $\left({S}^p,\|\cdot\|_{ {S}^p}\right)$ is a Banach space.
\item ${S}^{\infty}$ is the space of $\mathbb{R}$-valued càdlàg and $\mathcal{G}_t$-progressively measurable processes $Y$ such that
$$
\|Y\|_{{S}^\infty}:=\sup _{0 \leq t \leq T}\left\|Y_t\right\|_{\infty}<+\infty.
$$
\item For any $p \geq 1$, we denote by $\mathcal{E}^p$ the collection of all stochastic processes $Y$ such that $e^{|Y|} \in$ $S^p$.   We write $Y \in \mathcal{E}$ if $Y \in \mathcal{E}^p$ for any $p \geq 1$.
\item $\mathbb{H}^p$ is the space of real-valued and $\mathcal{G}_t$-progressively measurable processes $Z$ such that
$$
\|Z\|_{\mathbb{H}^p}^p:=\mathbb{E}\left[\left(\int_0^T\left|Z_t\right|^2 d t\right)^{\frac{p}{2}}\right]<+\infty .
$$
\item $L^0\left(\mathscr{B}(E)\right)$ denotes the space of $\mathcal B(E)$-measurable functions.  For $u \in L^0\left(\mathscr{B}(E)\right)$, define
$$
L^2(E,\mathcal{B}(E),\phi_t(\omega,dy)):=\left\{\left\|u\right\|_t:=\left(\int_E\left|u(e)\right|^2  \phi_t(d e)\right)^{1 / 2}<\infty\right\}.
$$
\item $H^{2,p}_{\nu}$ is the space of predictable processes $U$ such that 
$$
\|U\|_{H_\nu^{2,p}}:=\left(\mathbb{E}\left[\int_{[0, T]} \int_E\left|U_s(e)\right|^2 \phi_s(de) d A_s\right ]^{\frac{p}{2}}\right)^{\frac{1}{p}}<\infty.
$$
\item $H^{2,loc}_{\nu}$ is the space of predictable processes $U$ such that 
$$
\|U\|_{H_\nu^{2,loc}}:=\int_{[0, T]} \int_E\left|U_s(e)\right|^2 \phi_s(de) d A_s<\infty,\ \mathbb P-a.s.
$$
As in \cite{Confortola2013}, we say that $U, U^{\prime} \in H^{2,p}_{\nu}$ (respectively, $U, U^{\prime} \in H^{2,loc}_{\nu}$ ) are equivalent if they coincide almost everywhere with respect to the measure $\phi_t(\omega, d y) d A_t(\omega) \mathbb{P}(d \omega)$  and this happens if and only if $\left\|U-U^{\prime}\right\|_{H_{\nu}^{2,p}}=0$ (equivalently, $\left\|U-U^{\prime}\right\|_{H_{\nu}^{2,loc}}=0,\ \mathbb P-a.s.$). With a little abuse of notation, we  still denote $H^{2,p}_{\nu}$ (respectively, $H^{2,loc}_{\nu}$) the corresponding set of equivalence classes, endowed with the norm $\|\cdot\|_{H_\nu^{2,p}}$ (respectively, $\|\cdot\|_{H_\nu^{2,loc}} $). In addition, $H_{\nu}^{2,p}$ is a Banach space. 
\item $\mathbb{J}^{\infty}$ is the space of functions such that
$$
\|\psi\|_{\mathbb{J}^\infty}:=\left\| \left\|\psi\right\|_{{L}^{\infty}(\nu(\omega))}\right\|_{\infty}<\infty.
$$
\item $\mathcal S_{0,T}$ denotes the collection of $\mathbb{G}$-stopping times $\tau$ such that $0\le\tau\le T,\ \mathbb P-a.s.$. For any $\tau\in\mathcal S_{0,T}$, $\mathcal S_{\tau,T}$ denotes the collection of $\mathbb{G}$-stopping times $\tilde\tau$ such that $\tau\le\tilde\tau\le T,\ \mathbb P-a.s.$
\end{itemize}

For $u \in L^0\left(\mathscr{B}(E)\right)$ and $\lambda >0$, we introduce the positive predictable process denoted by
$$
j_\lambda (\cdot,u)=\int_E (\mathrm{e}^{\lambda  u(e)}-1-\lambda  u(e))\phi_{\cdot}(de).
$$

For $\beta>0$, we introduce the following spaces.
\begin{itemize}
    \item $\mathbb{L}_\beta^p$ is the set of real-valued càdlàg adapted process $Y$ such that $\|Y\|_{\mathbb{L}_\beta^p}^p = \sup_{0 \leq t \leq T} \mathbb{E}\left[e^{\beta A_s}Y_s^{p}\right]$. $\mathbb{L}_\beta^p$ is a Banach space. We set $\mathbb{L}^p= \mathbb{L}_0^p$.

    \item $L^{r, \beta}(A)$ is the space of all $\mathbb{F}$-progressive processes $X$ such that
$$
\|X\|_{L^{r, \beta}(A)}^r=\mathbb{E}\left[\int_0^T e^{\beta A_s}\left|X_s\right|^r d A_s\right]<\infty.
$$
    \item $L^{r, \beta}(p)$ is the space of all $\mathbb{F}$-predictable processes $U$ such that
$$
\|U\|_{L^{r, \beta}(p)}^r=\mathbb{E}\left[\int_0^T \int_E e^{\beta A_s}\left|U_s(e)\right|^r \phi_s(d e) d A_s\right]<\infty.
$$
    \item $L^{r, \beta}\left(W, \mathbb{R}^d\right)$ is the space of $\mathbb{F}$-progressive processes $Z$ in $\mathbb{R}^d$ such that
$$
\|Z\|_{L^{r, \beta}(W)}^r=\mathbb{E}\left[\int_0^T e^{\beta A_s}\left|Z_s\right|^r d s\right]<\infty.
$$
    \item $S^2_*$ is the space of all $\mathbb{F}$-progressive processes $Y$ such that
$$
\|Y\|_{S^2_*}^2 = \sup_{0 \leq t \leq T} \mathbb{E} \left[  Y_{t}^2 \right] < \infty.
$$
    \item $\mathscr{A}_D$ is the space of all càdlàg non-decreasing deterministic processes $K$ starting from the origin, i.e. $K_0 = 0$.
\end{itemize}

Next, we introduce some basic assumptions that run through the whole paper.

\hspace*{\fill}\\
\hspace*{\fill}\\
\noindent(\textbf{H1}) The process $A$ is continuous with $\|A_T\|_\infty<\infty$.
\hspace*{\fill}\\
\hspace*{\fill}\\

The first assumption is on the dual predictable projection $A$ of the counting process $N$ relative to $p$. We would like to emphasize that for $A_t$, we do not require absolute continuity with respect to the Lebesgue measure.

\hspace*{\fill}\\
\hspace*{\fill}\\
\hspace*{\fill}\\
\noindent(\textbf{H2}) For every $\omega \in \Omega,\ t \in[0, T],\ r \in \mathbb{R}$, $\mu \in \mathcal{P}_p(\mathbb{R})$ the mapping
$
f(\omega, t, r, \cdot, \mu):L^2(\mathcal{B}(E),\nu_t;\mathbb R) \rightarrow \mathbb{R}
$
satisfies:
for every $U \in {H_\nu^{2,2}}$,
$$
(\omega, t, r, \mu) \mapsto f\left(\omega, t, r, U_t(\omega, \cdot), \mu \right)
$$
is Prog $\otimes \mathscr{B}(\mathbb{R})$-measurable.

\hspace*{\fill}\\
\hspace*{\fill}\\
\noindent(\textbf{H3})

\textbf{(a) (Continuity condition)} 
For every $\omega \in \Omega, t \in[0, T], y \in \mathbb{R}$, $u\in L^2(\mathcal{B}(E),\nu_t; \mathbb R)$, $\mu \in \mathcal{P}_p(\mathbb{R})$,  $(y, u, \mu) \longrightarrow f(t, y, u, \mu)$ is continuous, the process $\left(f\left(t, 0,0,0, \delta_0\right)\right)_{t \leq T}$ is $\mathcal{P}$-measurable and  
$$
\mathbb{E}\left[\int_0^T e^{\beta A_s}|f(s, 0, 0, \delta_0)|^2 d A_s\right]<\infty.
$$
\hspace*{\fill}\\
\hspace*{\fill}\\

\textbf{(b) (Lipschitz condition)}  
There exists $\beta\geq 0$, such that for every $\omega \in \Omega,\ t \in[0, T],\ y_1, y_2 \in \mathbb{R}$,\ $u\in L^2(E,\mathcal{B}(E),\phi_t(\omega,dy))$, $\mu_1, \mu_2 \in \mathcal{P}_p(\mathbb{R})$, we have
$$
\begin{aligned}
& \left|f(\omega, t, y_1, u(\cdot), \mu_1)-f\left(\omega, t, y_2, u(\cdot), \mu_2\right)\right| \leq \beta\left(\left|y_1-y_2\right|+ \mathcal{W}_1\left(\mu_1, \mu_2\right)\right).
\end{aligned}
$$
\hspace*{\fill}\\

\textbf{(c) (Growth condition)}
For all $t \in[0, T], \ (y,u) \in \mathbb{R} \times  L^2(\mathcal{B}(E),\nu_t; \mathbb R):\ \mathbb{P}$-a.s, there exists $\lambda>0$ such that,
$$
\underline{q}(t, y, u)=-\frac{1}{\lambda} j_{\lambda}(t,- u)-\alpha_t-\beta\left(|y|+ \mathcal{W}_1\left(\mu, \delta_0\right)\right) \leq f(t, y, u, \mu) \leq \frac{1}{\lambda} j_{\lambda}(t, u)+\alpha_t+\beta\left(|y|+\mathcal{W}_1\left(\mu, \delta_0\right)\right)=\bar{q}(t, y, u),
$$
where $\{\alpha_t\}_{0 \leq t \leq T}$ is  a progressively measurable nonnegative stochastic process.
\hspace*{\fill}\\
\hspace*{\fill}\\

\textbf{(d) (Integrability condition)}
We assume necessarily,
$$
\forall p>0, \quad \mathbb{E}\left[\exp \left\{p \lambda e^{\beta A_T}(|\xi|\vee h_*)+p\lambda  \int_0^T e^{\beta A_s}\alpha_s d A_s\right\}+\int_0^T\alpha_s^2dA_s\right]<\infty.
$$
\hspace*{\fill}\\

\textbf{(e) (Convexity/Concavity condition)}  
For each $(t, y) \in[0, T] \times \mathbb{R}$, $u\in L^2(\mathcal{B}(E),\nu_t; \mathbb R), \ \mu \in \mathcal{P}_p(\mathbb{R}), \ u \rightarrow f(t, y, u, \mu)$ is convex or concave.

\hspace*{\fill}\\
\hspace*{\fill}\\
\noindent(\textbf{H4})
$h$ is a mapping from $[0, T] \times \Omega \times \mathbb{R} \times \mathcal{P}_p(\mathbb{R})$ into $\mathbb{R}$ such that
\hspace*{\fill}\\
\hspace*{\fill}\\

\textbf{(a) (Continuity condition)} 
For all $(y, \mu) \in \mathbb{R} \times \mathcal{P}_p(\mathbb{R}),(h(t, y, \mu))_t$ is a right-continuous left-limited process the process.
\hspace*{\fill}\\
\hspace*{\fill}\\

\textbf{(b) (Lipschitz condition in $y$ and $\mu$)}
$h$ is Lipschitz w.r.t. $(y, \mu)$ uniformly in $(t, \omega)$, i.e. there exists two positive constants $\gamma_1$ and $\gamma_2$ such that $\mathbb{P}$-a.s. for all $t \in[0, T]$,
$$
\left|h\left(t, y_1, \mu_1\right)-h\left(t, y_2, \mu_2\right)\right| \leq \gamma_1\left|y_1-y_2\right|+\gamma_2 \mathcal{W}_1\left(\mu_1, \mu_2\right)
$$
for any $y_1, y_2 \in \mathbb{R}$ and $\mu_1, \mu_2 \in \mathcal{P}_p(\mathbb{R})$.
\hspace*{\fill}\\
\hspace*{\fill}\\

\textbf{(c)}
The final condition $\xi: \Omega \rightarrow \mathbb{R}$ is $\mathscr{F}_T$-measurable and satisfies $\xi \geq h\left(T, \xi, \mathbb{P}_{\xi}\right)$;

\section{Lipschitz case}
\label{section Lip}
\noindent\textbf{(H3')}
same as (H3) but (b) is replaced by:
$f$ is Lipschitz w.r.t. $(y, u, \mu)$ uniformly in $(t, \omega)$, i.e. there exists a positive constant $C_f$ such that $\mathbb{P}$-a.s. for all $t \in[0, T]$,
$$
\left|f\left(t, y_1, u_1, \mu_1\right)-f\left(t, y_2, u_2, \mu_2\right)\right| \leq C_f\left(\left|y_1-y_2\right|+\left(\int_E\left|u_1(e)-u_2(e)\right|^2 \phi_t(\omega, d e)\right)^{1 / 2}+\mathcal{W}_p\left(\mu_1, \mu_2\right)\right)
$$
for any $y_1, y_2 \in \mathbb{R}, u_1, u_2 \in L^{r, \beta}(p)$ and $\mu_1, \mu_2 \in \mathcal{P}_p(\mathbb{R})$.\\
\hspace*{\fill}\\

\noindent\textbf{(H4')}
same as (H4) but (b) is replaced by:
$h$ is Lipschitz w.r.t. $(y, \mu)$ uniformly in $(t, \omega)$, i.e. there exists two positive constants $\gamma_1$ and $\gamma_2$ such that $\mathbb{P}$-a.s. for all $t \in[0, T]$,
$$
\left|h\left(t, y_1, \mu_1\right)-h\left(t, y_2, \mu_2\right)\right| \leq \gamma_1\left|y_1-y_2\right|+\gamma_2 \mathcal{W}_p\left(\mu_1, \mu_2\right)
$$
for any $y_1, y_2 \in \mathbb{R}$ and $\mu_1, \mu_2 \in \mathcal{P}_p(\mathbb{R})$.
\hspace*{\fill}\\

We first give a prior estimate about BSDE driven by a marked point process.

\begin{prop}
\label{estimate BSDE}
Let $T>0$ and let $f^1$ be a Lipschitz driver with Lipschitz constant $C$ and let $f^2$ be a driver. For $i=1,2$, let $\left(Y^i, U^i\right)$ be a solution of the BSDE associated to:
$$
Y_t^i=\xi^i+\int_t^T f(s, Y_s^i, U_s^i) d A_s -\int_t^T \int_E U_s^i(e) q(d s, d e).
$$
For $s$ in $[0, T]$, denote $\bar{Y}_s:=Y_s^1-Y_s^2, \bar{U}_s:=U_s^1-U_s^2, \bar{\xi} := \xi^1-\xi^2$, and $\bar{f}(s):=f^1\left(s, Y_s^2, U_s^2\right)-f^2\left(s, Y_s^2, U_s^2\right)$. Let $\eta, \beta>0$ be such that $2\beta \geq \frac{2}{\eta}+2 C$. If $\eta \leq \frac{1}{C^2}$, then, for each $t \in[0, T]$, we have
$$
|e^{\beta A_t} \bar{Y}_t|^2 \leq \mathbb{E}_t\left[|e^{\beta A_T} \bar{\xi}|^2 \right]+ \eta \mathbb{E}_t\left[\int_t^T |e^{\beta A_s} \bar{f}(s)|^2 d A_s\right] \quad a.s.
$$
and
$$
|e^{\beta A_t} \bar{Y}_t|^p \leq 2^{p/2-1}\mathbb{E}_t\left[|e^{\beta A_T} \bar{\xi}|^p \right]+ \eta^{p/2} \mathbb{E}_t\left[\left(\int_t^T |e^{\beta A_s} \bar{f}(s)|^2 d A_s\right)^{p/2} \right] \quad a.s.
$$
\end{prop}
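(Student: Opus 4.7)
The plan is to apply the Itô product rule to $e^{2\beta A_t}|\bar Y_t|^2$ and use the BSDE for $\bar Y$ together with the Lipschitz property of $f^1$ to absorb the quadratic terms. Since $A$ is continuous (by (H1)) the exponential $e^{2\beta A_t}$ is a continuous finite variation process, so the product rule yields
\[
e^{2\beta A_t}|\bar Y_t|^2 = e^{2\beta A_T}|\bar\xi|^2 - \int_t^T 2\beta e^{2\beta A_s}|\bar Y_s|^2\,dA_s - \int_t^T e^{2\beta A_s}\,d|\bar Y|^2_s .
\]
Writing $d\bar Y_s = -[f^1(s,Y^1_s,U^1_s)-f^2(s,Y^2_s,U^2_s)]\,dA_s + \int_E \bar U_s(e)\,q(ds,de)$ and expanding $d|\bar Y|^2_s = 2\bar Y_{s-}\,d\bar Y_s + d[\bar Y]_s$ with $d[\bar Y]_s = \int_E|\bar U_s(e)|^2 p(ds,de)$, then using $p = q + \phi_s(de)dA_s$ to split the jump bracket into a martingale part and a $\phi dA$-part, the identity becomes
\[
\begin{aligned}
e^{2\beta A_t}|\bar Y_t|^2 &+ \int_t^T e^{2\beta A_s}\int_E |\bar U_s(e)|^2\phi_s(de)\,dA_s
= e^{2\beta A_T}|\bar\xi|^2 - \int_t^T 2\beta e^{2\beta A_s}|\bar Y_s|^2\,dA_s \\
&+ \int_t^T 2e^{2\beta A_s}\bar Y_{s-}\bigl[f^1(s,Y^1_s,U^1_s)-f^2(s,Y^2_s,U^2_s)\bigr]\,dA_s - M_T + M_t,
\end{aligned}
\]
where $M$ collects the two $q(ds,de)$ stochastic integrals.

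Next I would split $f^1(s,Y^1_s,U^1_s)-f^2(s,Y^2_s,U^2_s)=[f^1(s,Y^1_s,U^1_s)-f^1(s,Y^2_s,U^2_s)]+\bar f(s)$, control the first bracket by the Lipschitz constant $C$, and apply Young's inequality in the forms
\[
2C|\bar Y_s|\,\|\bar U_s\|_s \leq \|\bar U_s\|_s^2 + C^2|\bar Y_s|^2,\qquad
2|\bar Y_s|\,|\bar f(s)| \leq \tfrac{1}{\eta}|\bar Y_s|^2 + \eta |\bar f(s)|^2 .
\]
The $\|\bar U_s\|_s^2$ term then cancels against the $\phi dA$-integral on the left-hand side, leaving
\[
e^{2\beta A_t}|\bar Y_t|^2 \leq e^{2\beta A_T}|\bar\xi|^2 + \int_t^T e^{2\beta A_s}\Bigl(-2\beta+2C+C^2+\tfrac{1}{\eta}\Bigr)|\bar Y_s|^2 dA_s + \eta\int_t^T e^{2\beta A_s}|\bar f(s)|^2 dA_s - M_T + M_t.
\]
The hypothesis $\eta \leq 1/C^2$ gives $C^2 \leq 1/\eta$, so $2C+C^2+1/\eta \leq 2C+2/\eta \leq 2\beta$, making the $|\bar Y_s|^2$ coefficient nonpositive. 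Taking $\mathbb E_t[\cdot]$ (after a standard localisation with stopping times $\tau_n\uparrow T$ to ensure the $q$-integrals are true martingales, which is justified because $\|A_T\|_\infty<\infty$ and $U^i,\bar Y$ lie in the appropriate spaces) kills the martingale increment and yields the first inequality.

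For the $L^p$ estimate, I would raise both sides of the first estimate to the power $p/2$ and use the elementary inequality $(a+b)^{p/2}\leq 2^{p/2-1}(a^{p/2}+b^{p/2})$ for $p\geq 2$, followed by Jensen's conditional inequality $\bigl(\mathbb E_t[X]\bigr)^{p/2}\leq \mathbb E_t\bigl[X^{p/2}\bigr]$ applied to each of the two nonnegative terms. The main obstacle is purely bookkeeping: juggling the exponential weight, cancelling the $\|\bar U\|_s^2$ term against the bracket-of-jumps contribution, and checking that the constants produced by Young's inequality combine correctly with the hypothesis $\eta\leq 1/C^2$ to reproduce the precise threshold $2\beta \geq 2/\eta + 2C$ stated in the proposition; no deep technique beyond Itô and Young is needed.
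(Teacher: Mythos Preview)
Your proof is correct and matches the paper's argument essentially line by line: It\^o's formula applied to $e^{2\beta A_\cdot}\bar Y_\cdot^2$, splitting the driver difference into a Lipschitz part and $\bar f$, Young's inequality to absorb the $\|\bar U\|$ term, and then conditional expectation followed by the convexity/Jensen step for the $L^p$ bound. The only cosmetic difference is that the paper applies a single Young inequality to $2|\bar Y_s|(C\|\bar U_s\|_s+|\bar f_s|)$ with parameter $\varepsilon$ and then uses $(a+b)^2\leq 2(a^2+b^2)$ before setting $\eta=2\varepsilon^2$, whereas you apply Young separately to the $\|\bar U\|$ and $|\bar f|$ cross terms; both routes produce exactly the same constraints $\eta\leq 1/C^2$ and $2\beta\geq 2/\eta+2C$.
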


\begin{proof}
From Itô's formula applied to the semimartingale $e^{\beta A_s} \bar{Y}_s$ between $t$ and $T$ and taking the conditional expectation given $\mathcal{F}_t$, it follows
$$
\begin{aligned}
&(e^{\beta A_t} \bar{Y}_t)^2  + \mathbb{E}_t \left[2\beta \int_t^T (e^{\beta A_s} \bar{Y}_s)^2 d A_s+\int_t^T\int_E (e^{\beta A_s} \bar{U}_s(e))^2 \phi_s(de)d A_s\right] \\
&=  2 \mathbb{E}_t \left[\int_t^T e^{2\beta A_s} \bar{Y}_s\left(f^1\left(s, Y_s^1, U_s^1\right)-f^2\left(s, Y_s^2, U_s^2\right)\right) d A_s\right] + \mathbb{E}_t\left[(e^{\beta A_t} \bar{\xi})^2 \right].
\end{aligned}
$$
Moreover,
$$
\begin{aligned}
\left|f^1\left(s, Y_s^1, U_s^1\right)-f^2\left(s, Y_s^2, U_s^2\right)\right| & \leq\left|f^1\left(s, Y_s^1, U_s^1\right)-f^1\left(s, Y_s^2, U_s^2\right)\right|+\left|\bar{f}_s\right| \\
& \leq C\left|\bar{Y}_s\right|+\left(C\left\|\bar{U}_s\right\|_\nu+\left|\bar{f}_s\right|\right) .
\end{aligned}
$$
Now, for all real numbers $y, z, k, f$ and $\varepsilon>0$
$$
2 y(C u+f) \leq \frac{y^2}{\varepsilon^2}+\varepsilon^2(C u+f)^2 \leq \frac{y^2}{\varepsilon^2}+2 \varepsilon^2\left(C^2 u^2+f^2\right) \text {. Hence, we }
$$
get
$$
\begin{aligned}
(e^{\beta A_t} & \bar{Y}_t)^2+\mathbb{E}_t\left[2\beta \int_t^T e^{\beta A_s} \bar{Y}_s^2 d A_s+\int_t^T\int_E (e^{\beta A_s} \bar{U}_s(e))^2 \phi_s(de)d A_s \right] \\
\leq & \mathbb{E}_t\left[\left(2 C+\frac{1}{\varepsilon^2}\right) \int_t^T (e^{\beta A_s} \bar{Y}_s)^2 d A_s+2 C^2 \varepsilon^2 \int_t^T (e^{\beta A_s}\left\|\bar{U}_s\right\|_\nu)^2d A_s \right] \\
& +2 \varepsilon^2 \mathbb{E}_t\left[\int_t^T (e^{\beta s} \bar{f}_s)^2 d A_s \right] + \mathbb{E}_t\left[(e^{\beta A_t} \bar{\xi})^2 \right] .
\end{aligned}
$$
Let us make the change of variable $\eta=2 \epsilon^2$. Then, for each $\beta, \eta>0$ chosen as in the proposition, we have
$$
\left|e^{\beta t} \bar Y_t\right|^2 \leq \mathbb{E}_t\left[\left|e^{\beta \tau} \bar \xi\right|^2 \right]+\eta \mathbb{E}_t\left[\left(\int_t^\tau\left|e^{\beta s} \bar f_s\right|^2 d s\right) \right] \text { a.s. }
$$
from which it follows that
$$
\left|e^{\beta t} \bar Y_t\right|^p \leq\left(\mathbb{E}_t\left[\left|e^{\beta \tau} \bar \xi\right|^2 \right]+\eta \mathbb{E}_t\left[\left(\int_t^\tau\left|e^{\beta s} \bar f_s\right|^2 d s\right) \right]\right)^{p / 2} \text { a.s. },
$$
which leads to, by convexity relation and Hölder inequality,
$$
\left|e^{\beta t} \bar Y_t\right|^p \leq 2^{p / 2-1}\left(\mathbb{E}_t\left[\left|e^{\beta \tau} \bar \xi\right|^p \right]+\eta^{\frac{p}{2}} \mathbb{E}_t\left[\left(\int_t^\tau\left|e^{\beta s} \bar f_s\right|^2 d s\right)^{p / 2} \right]\right) \text { a.s.. }
$$
\end{proof}

Let $\widehat{\Phi}: \mathbb{L}_\beta^p \longrightarrow \mathbb{L}_\beta^p$ be the mapping that associates to a process $Y$ another process $\widehat{\Phi}(Y)$ defined by:
$$
\widehat{\Phi}(Y)_t=\underset{\tau \text { stopping time } \geq t}{\operatorname{ess} \sup } \mathcal{E}_{t, \tau}^{f\circ Y}\left[\xi \mathbf{1}_{\{\tau=T\}}+h(\tau, Y_\tau,\mathbb{P}_{Y_s\mid s = \tau}) \mathbf{1}_{\{\tau<T\}}\right], \quad \forall t \leq T,
$$
where where $(f \circ Y)(t, y, u)=f\left(t, y, u, \mathbb{P}_{Y_t}\right)$ and $\mathcal{E}_{t, \tau}^{f\circ Y}\left[\xi \mathbf{1}_{\{\tau=T\}}+h(\tau, Y_\tau,\mathbb{P}_{Y_s\mid s = \tau}) \mathbf{1}_{\{\tau<T\}}\right]:=y_t^\tau$ is the solution to the following standard BSDE:
\begin{equation}
\label{standard BSDE}
y_t^\tau=\xi \mathbf{1}_{\{\tau=T\}}+h_\tau(Y_\tau,\mathbb{P}_{Y_s\mid s = \tau}) \mathbf{1}_{\{\tau<T\}}+\int_t^\tau f\left(s, y_s^\tau, u_s^\tau, \mathbb{P}_{Y_s\mid s = \tau})\right) d A_s-\int_t^\tau \int_E u_s^\tau(e) q(ds, de). 
\end{equation}

Next we introduce a $g$-expectation representation lemma based on comparison theorem which plays an important role in our proof.
\begin{lemma}[\cite{2023arXiv231020361L} lemma 4.1]
\label{representation}
Under assumptions (H1)-(H4), suppose for each $p\ge 1$, $(Y, U, K) $ is a solution to the reflected BSDE:
\begin{equation}
\label{BSDE with B}
\left\{\begin{array}{l}
Y_t=\xi+\int_t^T f\left(s, Y_s, U_s\right) d A_s +\int_t^T g\left(s, Y_s, Z_s\right) d s+\int_t^T d K_s-\int_t^T Z_s d B_s-\int_t^T \int_E U_s(e) q(d s, d e), \quad 0 \leq t \leq T, \quad \mathbb{P} \text {-a.s. } \\
Y_t \geq h(t), \quad 0 \leq t \leq T,\quad \mathbb{P} \text {-a.s. } \\
\int_0^T\left(Y_{s^{-}}-h(s^{-})\right) d K_s=0, \quad \mathbb{P} \text {-a.s. }
\end{array}\right.
\end{equation}
Then,
$$
Y_t=\underset{\tau \in \mathcal{S}_{t,T}}{\operatorname{ess} \sup } \ \mathcal{E}_{t, \tau}^f\left[\xi \mathbf{1}_{\{\tau=T\}}+h_\tau \mathbf{1}_{\{\tau<T\}}\right] .
$$
\end{lemma}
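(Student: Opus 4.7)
The plan is to establish the representation by proving the two inequalities $Y_t \geq \operatorname*{ess\,sup}_{\tau} \mathcal{E}_{t,\tau}^f[\cdot]$ and $Y_t \leq \operatorname*{ess\,sup}_{\tau} \mathcal{E}_{t,\tau}^f[\cdot]$ separately, the second by exhibiting an optimal stopping time. This is the standard Snell-envelope argument for reflected BSDEs, adapted here to the marked-point-process setting. Throughout I read $\mathcal{E}_{t,\tau}^f[\cdot]$ as the time-$t$ value of the solution of the non-reflected BSDE (\ref{BSDE with B}) without $K$ and with the indicated terminal datum at $\tau$.

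For the upper bound, fix $\tau\in\mathcal{S}_{t,T}$ and rewrite (\ref{BSDE with B}) on $[t,\tau]$ as
\begin{equation*}
Y_t = Y_\tau + \int_t^\tau f(s,Y_s,U_s)\,dA_s + \int_t^\tau g(s,Y_s,Z_s)\,ds + (K_\tau-K_t) - \int_t^\tau Z_s\,dB_s - \int_t^\tau\!\!\int_E U_s(e)\,q(ds,de).
\end{equation*}
On $\{\tau=T\}$, $Y_\tau=\xi$; on $\{\tau<T\}$, the obstacle constraint gives $Y_\tau\geq h_\tau$. Hence $Y_\tau\geq \xi\mathbf{1}_{\{\tau=T\}}+h_\tau\mathbf{1}_{\{\tau<T\}}$, and since $K_\tau-K_t\geq 0$, the comparison theorem for Lipschitz BSDEs driven by a Brownian motion and a marked point process (which can be derived directly from Proposition~\ref{estimate BSDE} combined with standard linearization) yields $Y_t \geq \mathcal{E}_{t,\tau}^f[\xi\mathbf{1}_{\{\tau=T\}}+h_\tau\mathbf{1}_{\{\tau<T\}}]$. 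Taking the essential supremum over $\tau\in\mathcal{S}_{t,T}$ gives one direction.

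For the reverse inequality, introduce the optimal stopping time $\tau_t^*:=\inf\{s\geq t:Y_s\leq h_s\}\wedge T$. Two facts are needed. First, on $[t,\tau_t^*)$ one has $Y_s>h_s$, so by the flat-off condition $\int_0^T(Y_{s^-}-h_{s^-})\,dK_s=0$ the measure $dK$ puts no mass on $[t,\tau_t^*]$, i.e. $K_{\tau_t^*}=K_t$. Second, $Y_{\tau_t^*}$ coincides with the payoff $\xi\mathbf{1}_{\{\tau_t^*=T\}}+h_{\tau_t^*}\mathbf{1}_{\{\tau_t^*<T\}}$: on $\{\tau_t^*=T\}$ this is the terminal condition (H4)(c), while on $\{\tau_t^*<T\}$ the right-continuity of $Y-h$ forces $Y_{\tau_t^*}\leq h_{\tau_t^*}$, which combined with the obstacle constraint yields equality. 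Consequently $(Y,Z,U)$ restricted to $[t,\tau_t^*]$ solves the non-reflected BSDE with terminal value the stated payoff, and by uniqueness $Y_t=\mathcal{E}_{t,\tau_t^*}^f[\xi\mathbf{1}_{\{\tau_t^*=T\}}+h_{\tau_t^*}\mathbf{1}_{\{\tau_t^*<T\}}]$, which dominates the claimed essential supremum from below and concludes the proof.

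The main obstacle is the interaction between the jumps of $Y$ (coming from $q$) and the measurability/path properties needed for $\tau_t^*$. Because the flat-off condition is phrased with left limits $Y_{s^-}$, one has to verify that $\{s\in[t,\tau_t^*]:Y_{s^-}=h_{s^-}\}$ has zero $dK$-measure; this uses the left-continuity of $K$ (from (H1), as $A$ is continuous and so is $K$ under Lipschitz generators by the construction in Section~\ref{section Lip}) together with the right-continuity of $h$ from (H4)(a). A secondary technical point is checking that the essential supremum is actually attained by an admissible stopping time, which relies on $\tau_t^*\in\mathcal{S}_{t,T}$; this is standard once $Y-h$ is shown to be optional and càdlàg.
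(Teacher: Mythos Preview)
The paper does not supply its own proof of this lemma: it is quoted verbatim from \cite{2023arXiv231020361L}, Lemma~4.1, and used as a black box throughout Sections~\ref{section Lip} and~\ref{section exp}. So there is no in-paper argument to compare against.

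That said, your proposal is the standard Snell-envelope proof that the cited reference (and the classical works of El~Karoui et~al.\ and Quenez--Sulem in the jump setting) use: comparison gives $Y_t\geq \mathcal{E}_{t,\tau}^f[\cdot]$ for every $\tau$, and the debut time $\tau_t^*=\inf\{s\geq t:Y_s\leq h_s\}\wedge T$ realises equality because the Skorokhod condition kills $K$ on $[t,\tau_t^*]$. Your identification of the two subtle points---that the flat-off condition is stated in left limits while $\tau_t^*$ is defined through right limits, and that one needs $K$ continuous (which here follows from the continuity of $A$ in (H1)) to pass from $(t,\tau_t^*)$ to $[t,\tau_t^*]$---is accurate and is exactly what the cited proof has to handle. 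One small gap worth tightening: you assert that $\{s\in[t,\tau_t^*]:Y_{s^-}=h_{s^-}\}$ has zero $dK$-measure ``by left-continuity of $K$ together with right-continuity of $h$'', but the actual mechanism is that $K$ is continuous and predictable while the jumps of $Y$ occur at the totally inaccessible times $T_n$, so $dK$ cannot charge those jump times; on the complement one has $Y_{s^-}=Y_s>h_s\geq h_{s^-}$ for $s<\tau_t^*$. Making that explicit would close the argument.
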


\begin{remark}
\label{space of y}
     \textcolor{red}{From \cite{foresta2021optimal} we can build a contraction mapping on $L^{2, \beta}(A) \cap$ $L^{2, \beta}(W) \times L^{2, \beta}(p) \times L^{2, \beta}(W) \times \mathcal{A}_D$, but acually, by proposition 4.2 in \cite{2023arXiv231020361L}, we can deduce that \textcolor{red}{$Y \in \mathcal{E}$}.}
\end{remark}

\begin{thm}
\label{thm lip}
Suppose that Assumption (H1),(H2),(H3'),(H4') and (H5) are satisfied for some $p\geq 2$. Assume that $\gamma_1$ and $\gamma_2$ satisfy
$$
\gamma_1^p+\gamma_2^p<2^{1-p}.
$$
Then the system (\ref{reflected BSDE}) has a unique solution in $\mathcal{E} \times H_\nu^{2,p} \times \mathbb{K}^p$.
\end{thm}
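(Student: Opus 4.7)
The plan is to set up a Picard contraction on the marginal laws, using Lemma~\ref{representation} to reduce the mean-field reflected problem to a family of standard non-reflected BSDEs to which Proposition~\ref{estimate BSDE} applies.

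First, I would define a candidate map $\widehat\Phi$ on $\mathbb L_\beta^p$, for a $\beta$ to be chosen later: given $Y\in\mathbb L_\beta^p$, I freeze the marginals to form the (non-mean-field) data $f^Y(s,y,u):=f(s,y,u,\mathbb P_{Y_s})$ and $h^Y_s:=h(s,Y_s,\mathbb P_{Y_s})$, solve the resulting Lipschitz reflected BSDE driven by $q$ with data $(f^Y,h^Y,\xi)$ via the well-posedness results of Foresta \cite{foresta2021optimal} to obtain a triple $(\widehat\Phi(Y),\widehat U(Y),\widehat K(Y))$, and note that Lemma~\ref{representation} yields the Snell-envelope identity
$$
\widehat\Phi(Y)_t \;=\; \underset{\tau\in\mathcal S_{t,T}}{\operatorname{ess\,sup}}\; \mathcal E_{t,\tau}^{f^Y}\!\left[\xi\mathbf 1_{\{\tau=T\}}+h^Y_\tau\mathbf 1_{\{\tau<T\}}\right].
$$

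The heart of the argument is then a contraction estimate for $\widehat\Phi$. Taking two inputs $Y^1,Y^2\in\mathbb L_\beta^p$ and using the elementary bound $|\operatorname{ess\,sup}X-\operatorname{ess\,sup}Z|\le\operatorname{ess\,sup}|X-Z|$, I would dominate $|\widehat\Phi(Y^1)_t-\widehat\Phi(Y^2)_t|$ by the essential supremum over $\tau$ of the difference of two non-reflected BSDE $Y$-components on $[t,\tau]$. Proposition~\ref{estimate BSDE} then applies with terminal discrepancy $\bar\xi=(h^{Y^1}_\tau-h^{Y^2}_\tau)\mathbf 1_{\{\tau<T\}}$, controlled by (H4$'$) by $\gamma_1|Y^1_\tau-Y^2_\tau|+\gamma_2\,\mathcal W_p(\mathbb P_{Y^1_\tau},\mathbb P_{Y^2_\tau})$, and with driver discrepancy $\bar f(s)=(f^{Y^1}-f^{Y^2})(s,Y^2_s,U^2_s)$, controlled by (H3$'$) by $C_f\,\mathcal W_p(\mathbb P_{Y^1_s},\mathbb P_{Y^2_s})$. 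Raising the output of Proposition~\ref{estimate BSDE} to its $p$-th power, applying $(a+b)^p\le 2^{p-1}(a^p+b^p)$, and using the coupling bound $\mathcal W_p^p(\mathbb P_{Y^1_s},\mathbb P_{Y^2_s})\le\mathbb E[|Y^1_s-Y^2_s|^p]$, I expect an inequality of the form
$$
\|\widehat\Phi(Y^1)-\widehat\Phi(Y^2)\|_{\mathbb L_\beta^p}^p \;\le\; 2^{p-1}(\gamma_1^p+\gamma_2^p)\,\|Y^1-Y^2\|_{\mathbb L_\beta^p}^p \;+\; \varepsilon(\beta)\,\|Y^1-Y^2\|_{\mathbb L_\beta^p}^p,
$$
where $\varepsilon(\beta)\to 0$ as $\beta\to\infty$ (equivalently, as the parameter $\eta$ in Proposition~\ref{estimate BSDE} tends to $0$). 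Choosing $\beta$ sufficiently large absorbs the driver contribution, and the standing hypothesis $\gamma_1^p+\gamma_2^p<2^{1-p}$ forces the leading coefficient strictly below $1$, so $\widehat\Phi$ becomes a strict contraction on the Banach space $\mathbb L_\beta^p$. Its unique fixed point $Y^\ast$, together with the frozen-problem outputs $(\widehat U(Y^\ast),\widehat K(Y^\ast))$, solves (\ref{reflected BSDE}) and lies in $H_\nu^{2,p}\times\mathbb K^p$ thanks to the standard Lipschitz RBSDE estimates.

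The step I anticipate as the main obstacle is upgrading the regularity of the fixed point from $\mathbb L_\beta^p$ to the class $\mathcal E$ demanded by the statement, since $\mathcal E$ requires $e^{|Y^\ast|}\in S^q$ for every $q\ge 1$ and this does not come for free from the contraction. Following the route indicated in Remark~\ref{space of y}, I would re-read the frozen reflected BSDE at $Y=Y^\ast$ as a classical Lipschitz RBSDE driven by $q$ and invoke Proposition~4.2 of \cite{2023arXiv231020361L}; combined with the boundedness of $A_T$ granted by (H1) and the integrability condition (H3$'$)(d) on $\xi$ and $\alpha$, this should deliver $Y^\ast\in\mathcal E$. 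Uniqueness in $\mathcal E\times H_\nu^{2,p}\times\mathbb K^p$ then reduces to uniqueness of the contraction fixed point, since any solution in the larger class automatically sits in $\mathbb L_\beta^p$ and must coincide with $Y^\ast$.
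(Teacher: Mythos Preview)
Your overall architecture---freeze the law, use Lemma~\ref{representation} to rewrite the reflected solution as a Snell envelope of non-reflected BSDE values, and control the difference of two such envelopes via Proposition~\ref{estimate BSDE}---matches the paper's. The gap is in how you close the contraction.

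The claim that the driver contribution $\varepsilon(\beta)$ tends to $0$ as $\beta\to\infty$ (equivalently $\eta\to 0$) is not correct in this weighted setting. In Proposition~\ref{estimate BSDE} the parameters are tied together by $\beta\ge 1/\eta+C_f$, and the driver term carries the factor
\[
\eta^{p/2}\Bigl(\int_t^\tau e^{2\beta A_s}\,|\bar f(s)|^2\,dA_s\Bigr)^{p/2}.
\]
Here $|\bar f(s)|\le C_f\,\mathcal W_p(\mathbb P_{Y^1_s},\mathbb P_{Y^2_s})$ is deterministic, but the weight $e^{2\beta A_s}$ sits inside the $dA_s$-integral. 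After passing to the $\mathbb L_\beta^p$-norm you are left with a coefficient of order $\eta^{p/2}\int_0^T e^{2\beta A_s}\,dA_s$. Since $\beta\sim 1/\eta$ and $A_T$ is bounded but strictly positive, this behaves like $\eta^{p}e^{c/\eta}$ and blows up; the exponential growth of the weight defeats the polynomial decay in $\eta$. So you cannot force a global contraction on $[0,T]$ merely by tuning $\beta$.

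What the paper does instead is fix $\eta$ and $\beta$ once and for all (subject to the constraints of Proposition~\ref{estimate BSDE}) and kill the driver term by working on a short subinterval $[T-h,T]$: the factor $\int_{T-h}^T e^{p\beta A_s}\,dA_s$ can be made arbitrarily small by shrinking $h$, and combined with the hypothesis on $\gamma_1,\gamma_2$ this yields a contraction on $[T-h,T]$. One then identifies the local fixed point, via Lemma~\ref{representation}, with the $Y$-component of the frozen reflected BSDE (this is also where the $\mathcal E$ regularity is read off, as you anticipated), takes $Y_{T-h}$ as a new terminal condition, and iterates backward finitely many times to cover $[0,T]$, stitching the local solutions. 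Your sketch is missing this local-in-time contraction plus pasting step, which is the mechanism that actually delivers a contraction constant below $1$.

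A secondary technical point you glossed over: after bounding $|\widehat\Phi(Y^1)_t-\widehat\Phi(Y^2)_t|$ by an $\operatorname{ess\,sup}_\tau$ of conditional expectations, passing to $\sup_\tau \mathbb E[\cdot]$ on the outside requires an upcrossing-type argument (the paper invokes Lemma~D.1 of \cite{karatzas1998methods} and Fatou); the inequality $|\operatorname{ess\,sup}X-\operatorname{ess\,sup}Z|\le \operatorname{ess\,sup}|X-Z|$ alone does not give this.
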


\begin{proof}
The proof is organized in three steps. 

\textbf{Step 1.} We first show that $\widehat{\Phi}$ is a well-defined mapping. Indeed, let $\bar{Y} \in \mathbb{L}_\beta^p$. Since $h$ satisfies Assumption H(4), it follows that $h\left(t, \bar{Y}_t, \mathbb{P}_{\bar{Y}_t}\right)$ satisfies Assumption H(3) (d). Therefore, from \cite{foresta2021optimal} and Remark \ref{space of y} we know that there exists an unique solution $(\hat{Y}, \hat{U}, \hat{K}) \in \mathcal{E} \times \mathcal{H}_\nu^p \times \mathbb{K}^p$ of the reflected BSDE associated with obstacle $h\left(t, \bar{Y}_t, \mathbb{P}_{\bar{Y}_t}\right)$, terminal condition $\xi$ and driver $(f \circ \bar{Y})(t, \omega, y, u)$. By the comparison results in \cite{2023arXiv231020361L} we can build connection between the $Y$-component of the solution of a reflected BSDE and optimal stopping with nonlinear expectations (see Lemma \ref{representation}), we get that $\widehat{\Phi}(\bar{Y})=\hat{Y} \in \mathcal{E} \subset \mathbb{L}_\beta^p$.

\textbf{Step2.} We show that $\widehat{\Phi} : \mathbb{L}_\beta^p \longrightarrow \mathbb{L}_\beta^p$ is a contraction mapping in $[T-h, T]$. 
First, note that by the Lipschitz continuity of $f$ and $h$, for $Y, \bar{Y} \in \mathbb{L}_\beta^p $ and $U, \bar{U} \in \mathcal{H}_\nu^p$,
\begin{equation}
\begin{aligned}
\label{lip condition}
& \left|f\left(s, Y_s, U_s, \mathbb{P}_{Y_s}\right)-f\left(s, \bar{Y}_s, \bar{U}_s, \mathbb{P}_{\bar{Y}_s}\right)\right| \leq C_f\left(\left|Y_s-\bar{Y}_s\right|+\left|U_s-\bar{U}_s\right|_\nu+\mathcal{W}_p\left(\mathbb{P}_{Y_s}, \mathbb{P}_{\bar{Y}_s}\right)\right), \\
& \left|h\left(s, Y_s, \mathbb{P}_{Y_s}\right)-h\left(s, \bar{Y}_s, \mathbb{P}_{\bar{Y}_s}\right)\right| \leq \gamma_1\left|Y_s-\bar{Y}_s\right|+\gamma_2 \mathcal{W}_p\left(\mathbb{P}_{Y_s}, \mathbb{P}_{\bar{Y}_s}\right).
\end{aligned}
\end{equation}
For the $p$-Wasserstein distance, we have the following inequality: for $0 \leq s \leq u \leq t \leq T$,
$$
\sup _{u \in[s, t]} \mathcal{W}_p\left(\mathbb{P}_{Y_u}, \mathbb{P}_{\bar{Y}_u}\right) \leq \sup _{u \in[s, t]}\left(\mathbb{E}\left[\left|Y_u-\bar{Y}_u\right|^p\right]\right)^{\frac{1}{p}}
$$
from which we derive the following useful inequality
$$
\sup _{u \in[s, t]} \mathcal{W}_p\left(\mathbb{P}_{Y_u}, \delta_0\right) \leq \sup _{u \in[s, t]}\left(\mathbb{E}\left[\left|Y_u\right|^p\right]\right)^{\frac{1}{p}}.
$$
Fix $Y, \bar{Y} \in \mathbb{L}_\beta^p$. For any $t \leq T$, by Proposition \ref{estimate BSDE}, we have
$$
\begin{aligned}
& \left| \widehat{\Phi}(Y)_t -\widehat{\Phi}(\bar{Y})_t \right|^p\\
&= \underset{\tau \in \mathcal{T}_t}{\operatorname{ess} \sup } \ \left| \mathcal{E}_{t, \tau}^{f\circ Y}\left[\xi \mathbf{1}_{\{\tau=T\}}+h(\tau, Y_\tau,\mathbb{P}_{Y_s\mid s = \tau}) \mathbf{1}_{\{\tau<T\}}\right] -  \mathcal{E}_{t, \tau}^{f\circ \bar{Y}}\left[\xi \mathbf{1}_{\{\tau=T\}}+h(\tau, \bar{Y}_\tau,\mathbb{P}_{\bar{Y}_s\mid s = \tau}) \mathbf{1}_{\{\tau<T\}}\right]\right|^p \\
& \leq \underset{\tau \in \mathcal{T}_t}{\operatorname{ess} \sup } \ 2^{\frac{p}{2}-1}\mathbb{E}_t\left[\eta^{\frac{p}{2}} \left(\int_t^\tau e^{2 \beta(A_s-A_t)}\left|(f \circ Y) \left(s, \widehat{Y}_s^\tau, \widehat{U}_s^\tau \right)-(f \circ \bar{Y})\left(s, \widehat{Y}_s^\tau, \widehat{U}_s^\tau \right)\right|^2 d A_s\right)^{\frac{p}{2}}\right. \\
& \quad \left.+e^{p \beta(A_\tau-A_t)}\left|h\left(\tau, Y_\tau, \mathbb{P}_{Y_s \mid s=\tau}\right)-h\left(\tau, \bar{Y}_\tau, \mathbb{P}_{\bar{Y}_s \mid s=\tau}\right)\right|^p \right], \\
&= \underset{\tau \in \mathcal{T}_t}{\operatorname{ess} \sup } \ 2^{\frac{p}{2}-1}\mathbb{E}_t\left[\eta^{\frac{p}{2}} \left(\int_t^\tau e^{2 \beta(A_s-A_t)}\left|f\left(s, \widehat{Y}_s^\tau, \widehat{U}_s^\tau, \mathbb{P}_{Y_s} \right)-f\left(s, \widehat{Y}_s^\tau, \widehat{U}_s^\tau, \mathbb{P}_{\bar{Y}_s} \right)\right|^2 d A_s\right)^{\frac{p}{2}}\right. \\
& \quad \left.+e^{p\beta(A_\tau-A_t)}\left|h\left(\tau, Y_\tau, \mathbb{P}_{Y_s \mid s=\tau}\right)-h\left(\tau, \bar{Y}_\tau, \mathbb{P}_{\bar{Y}_s \mid s=\tau}\right)\right|^p \right], \\
\end{aligned}
$$
with $\eta, \beta>0$ such that $\eta \leq \frac{1}{C_f^2}$ and $2\beta \geq 2 C_f+\frac{2}{\eta}$, where $\left(\widehat{Y}^\tau, \widehat{Z}^\tau, \widehat{U}^\tau\right)$ is the solution of the BSDE associated with driver $f \circ \bar{Y}$ and terminal condition $h\left(\tau, \bar{Y}_\tau, \mathbb{P}_{\bar{Y}_s \mid s=\tau}\right) \mathbf{1}_{\{\tau<T\}}+\xi \mathbf{1}_{\{\tau=T\}}$. Therefore, using (\ref{lip condition}) and the fact that
$$
\mathcal{W}_p\left(\mathbb{P}_{Y_s \mid s=\tau}, \mathbb{P}_{\bar{Y}_s \mid s=\tau}\right) \leq \mathbb{E}\left[\left|Y_s-\bar{Y}_s\right|^p \right]_{\mid s=\tau} \leq \sup _{\tau \in \mathcal{T}_t} \mathbb{E}\left[\left|Y_\tau-\bar{Y}_\tau\right|^p\right]^{\frac{1}{p}},
$$
we have, for any $t \in [T-h, T]$,
$$
\begin{aligned}
&e^{p \beta A_t}\left|\widehat{\Phi}(Y)_t-\widehat{\Phi}(\bar{Y})_t\right|^p\\ 
&\leq \underset{\tau \in \mathcal{T}_t}{\operatorname{ess} \sup }\ 2^{\frac{p}{2}-1}\mathbb{E}_t\left[\int_t^\tau e^{p\beta A_s} \eta^{\frac{p}{2}} C_f^p \left(A_T-A_{T-h}\right)^{\frac{p-2}{p}}\mathbb{E}\left[\left|Y_s-\bar{Y}_s\right|^p\right] d A_s+e^{p \beta A_\tau}\left(\gamma_1\left|Y_\tau-\bar{Y}_\tau\right|+\gamma_2\left\{\mathbb{E}\left[\left|Y_s-\bar{Y}_s\right|^p\right]_{\mid s=\tau}\right\}^{1 / p}\right)^p \right]\\
& \leq \underset{\tau \in \mathcal{T}_t}{\operatorname{ess} \sup }  \ 2^{\frac{p}{2}-1}\mathbb{E}_t\left[\int_t^\tau e^{p \beta A_s} \eta^{\frac{p}{2}} C_f^p \left(A_T-A_{T-h}\right)^{\frac{p-2}{p}}\mathbb{E}\left[\left|Y_s-\bar{Y}_s\right|^p\right] d A_s +e^{p \beta A_\tau}\left\{2^{p-1} \gamma_1^p\left|Y_\tau-\bar{Y}_\tau\right|^p+2^{p-1} \gamma_2^p \mathbb{E}\left[\left|Y_s-\bar{Y}_s\right|^p\right]_{\mid s=\tau}\right\} \right]  \\
&\leq  \underset{\tau \in \mathcal{T}_t}{\operatorname{ess} \sup }  \ 2^{\frac{p}{2}-1} \\
&\quad \times \mathbb{E}_t \left[\eta^{\frac{p}{2}} C_f^p \left(A_T-A_{T-h}\right)^{\frac{p-2}{p}} \sup_{s \in [T-h ,T]}\mathbb{E}\left[\left|Y_s-\bar{Y}_s\right|^p\right] \int_{T-h}^T e^{p \beta A_s}  d A_s +e^{p \beta A_\tau}\left\{2^{p-1} \gamma_1^p\left|Y_\tau-\bar{Y}_\tau\right|^p+2^{p-1} \gamma_2^p \mathbb{E}\left[\left|Y_s-\bar{Y}_s\right|^p\right]_{\mid s=\tau}\right\} \right].
\end{aligned}
$$
Therefore,
$$
e^{p \beta A_t}\left|\widehat{\Phi}(Y)_t-\widehat{\Phi}(\bar{Y})_t\right|^p \leq \underset{\tau \in \mathcal{T}_t}{\operatorname{ess} \sup } \ \mathbb{E}_t\left[G(\tau)\right]:=V_t,
$$
in which
$$
\begin{aligned}
& G(\tau)\\
& :=2^{\frac{p}{2}-1} \left(\eta^{\frac{p}{2}} C_f^p \left(A_T-A_{T-h}\right)^{\frac{p-2}{p}} \sup_{s \in [T-h ,T]}\mathbb{E}\left[\left|Y_s-\bar{Y}_s\right|^p\right] \int_{T-h}^T e^{p\beta A_s}  d A_s +e^{p \beta A_\tau}\left(2^{p-1} \gamma_1^p\left|Y_\tau-\bar{Y}_\tau\right|^p+2^{p-1} \gamma_2^p \mathbb{E}\left[\left|Y_s-\bar{Y}_s\right|^p\right]_{\mid s=\tau}\right)\right),
\end{aligned}
$$
which yields
$$
\sup _{\tau \in \mathcal{T}_{T-h}} \mathbb{E}\left[e^{p \beta A_\tau}\left|\widehat{\Phi}(Y)_\tau-\widehat{\Phi}(\bar{Y})_\tau\right|^p\right] \leq \sup_{\tau \in \mathcal{T}_{T-h}} \mathbb{E}\left[V_\tau\right].
$$
By Lemma D.1 in \cite{karatzas1998methods}, for any $\sigma \in \mathcal{T}_{T-\delta}$, there exists a sequence $\left(\tau_n\right)_n$ of stopping times in $\mathcal{T}_\sigma$ such that
$$
V_\sigma=\lim _{n \rightarrow \infty} \mathbb{E}\left[G\left(\tau_n\right) \mid \mathcal{F}_\sigma\right],
$$
and so, by Fatou's Lemma, we have
$$
\mathbb{E}\left[V_\sigma\right] \leq \varliminf_{n \rightarrow \infty} \mathbb{E}\left[G\left(\tau_n\right)\right] \leq \sup _{\tau \in \mathcal{T}_{T-h}} \mathbb{E}[G(\tau)].
$$
Therefore,
$$
\begin{aligned}
&\sup _{\tau \in \mathcal{T}_{T-h}} \mathbb{E}\left[e^{p \beta A_\tau}\left|\widehat{\Phi}(Y)_\tau-\widehat{\Phi}(\bar{Y})_\tau\right|^p \right]\\
&\leq \sup_{\tau \in \mathcal{T}_{T-h}} \mathbb{E}[G(\tau)]\\
&= \sup_{\tau \in \mathcal{T}_{T-h}} 2^{\frac{p}{2}-1} \mathbb{E} \left[ \eta^{\frac{p}{2}} C_f^p \left(A_T-A_{T-h}\right)^{\frac{p-2}{p}} \sup_{s \in [T-h ,T]}\mathbb{E}\left[\left|Y_s-\bar{Y}_s\right|^p\right] \int_{T-h}^T e^{p \beta A_s}  d A_s \right.\\
&\quad \left.+e^{p \beta A_\tau}\left(2^{p-1} \gamma_1^p\left|Y_\tau-\bar{Y}_\tau\right|^p+2^{p-1} \gamma_2^p \mathbb{E}\left[\left|Y_s-\bar{Y}_s\right|^p\right]_{\mid s=\tau}\right) \right]\\
&\leq  2^{\frac{p}{2}-1} \eta^{\frac{p}{2}} C_f^p \left(A_T-A_{T-h}\right)^{\frac{p-2}{p}} \int_{T-h}^T e^{p \beta A_s}  d A_s \sup_{\tau \in \mathcal{T}_{T-h}}\mathbb{E}\left[\left|Y_\tau-\bar{Y}_\tau \right|^p\right] + 2^{\frac{p}{2}-1} 2^{p-1} \gamma_1^p \sup_{\tau \in \mathcal{T}_{T-h}}\mathbb{E}\left[e^{p \beta A_\tau}\left|Y_\tau-\bar{Y}_\tau\right|^p\right] \\
&\quad +2^{\frac{p}{2}-1} 2^{p-1} \gamma_2^p \mathbb{E}\left[ e^{p \beta A_T}\right]\sup_{\tau \in \mathcal{T}_{T-h}}\mathbb{E}\left[e^{p \beta A_\tau}\left|Y_\tau-\bar{Y}_\tau\right|^p\right]\\
&= \alpha \sup_{\tau \in \mathcal{T}_{T-h}}\mathbb{E}\left[e^{p \beta A_\tau}\left|Y_\tau-\bar{Y}_\tau\right|^p\right],
\end{aligned}
$$
where $\alpha:=  2^{\frac{p}{2}-1} \eta^{\frac{p}{2}} C_f^p \left(A_T-A_{T-h}\right)^{\frac{p-2}{p}} \int_{T-h}^T e^{p \beta A_s}  d A_s+  2^{\frac{p}{2}-1} 2^{p-1} \left( \gamma_1^p+\gamma_2^p \mathbb{E}\left[ e^{p \beta A_T}\right]\right)$. 
Assuming $\left(\gamma_1, \gamma_2\right)$ satisfies
$$
\gamma_1^p+\gamma_2^p \mathbb{E}\left[ e^{p \beta A_T}\right]<2^{2-\frac{3p}{2}},
$$
we can choose a small enough $h$ such that: $T=nh$ and 
$$
\|A_{ih}-A_{(i-1)h}\|_\infty^{\frac{p-2}{p}} \max_{1\le i\le n} \int_{(i-1)h}^{ih} e^{p \beta A_s}  d A_s < \frac{1}{2^{\frac{p}{2}-1} \eta^{\frac{p}{2}} C_f^p}\left( 1 - 2^{\frac{3p}{2}-2} \left( \gamma_1^p+\gamma_2^p \mathbb{E}\left[ e^{p \beta A_T}\right]\right)\right);
$$

\begin{equation}
\label{condition}
e^{p \beta A_T} \|A_{ih}-A_{(i-1)h}\|_\infty^{2-\frac{2}{p}} < \frac{1}{2^{\frac{p}{2}-1} \eta^{\frac{p}{2}} C_f^p}\left( 1 - 2^{\frac{3p}{2}-2} \left( \gamma_1^p+\gamma_2^p \mathbb{E}\left[ e^{p \beta A_T}\right]\right)\right)
\end{equation}
to make $\widehat{\Phi}$ a contraction on $\mathbb{L}_\beta^p$ over the time interval $[T-h, T]$, i.e. $\widehat{\Phi}$ admits a unique fixed point over $[T-h, T]$.

\textbf{Step 3.} We finally show that the mean-field RBSDE with jumps (\ref{reflected BSDE}) has a unique solution.
Given $Y \in \mathbb{L}_\beta^p$ the solution over $[T-h, T]$ obtained in Step 2 , let $(\hat{Y}, \hat{U}, \hat{K}) \in \mathcal{E} \times \mathcal{H}_\nu^p \times \mathbb{K}^p$ be the unique solution of the standard reflected BSDE, with barrier $h\left(s, Y_s, \mathbb{P}_{Y_s}\right)$  (which belongs to $\mathcal{E}$ by Assumption (H4') and driver $g(s, y, u):=f\left(s, y, u, \mathbb{P}_{Y_s}\right)$. Then,
$$
\hat{Y}_t=\underset{\tau \in \mathcal{T}_t}{\operatorname{ess} \sup }\  \mathcal{E}_{t, \tau}^g\left[h\left(\tau, Y_\tau,\left.\mathbb{P}_{Y_s}\right|_{s=\tau}\right) \mathbf{1}_{\{\tau<T\}}+\xi \mathbf{1}_{\{\tau=T\}}\right].
$$
By the fixed point argument from Step 2, we have $\hat{Y}=Y$ a.s., which implies that $Y \in \mathcal{E}$ and
$$
Y_t=\xi+\int_t^T f\left(s, Y_s, \hat{U}_s, \mathbb{P}_{Y_s}\right) d A_s+\hat{K}_T-\hat{K}_t-\int_t^T\int_E \hat{U}_s(e)q(ds, de) , \quad T-h\leq t \leq T,
$$
which yields existence of a solution. Furthermore, the process $Y_t$ is a nonlinear g-supermartingale. Hence, by the uniqueness of the nonlinear Doob-Meyer decomposition, we obtain uniqueness of the associated processes $(\hat{U}, \hat{K})$. This yields existence and uniqueness of (\ref{reflected BSDE}) in this general case.
Next we stitch the local solutions to get the global solution on $[0,T]$.
Choose  $h$ such that (\ref{condition}) holds and $T=n h$ for some integer $n$. Then BSDE (\ref{reflected BSDE}) admits a unique deterministic flat solution $\left(Y^n, U^n, K^n\right)$ on the time interval $[T-h, T]$. Next we take $T-h$ as the terminal time and $Y_{T-h}^n$ as the terminal condition. We can find the unique deterministic flat solution of (\ref{reflected BSDE}) $\left(Y^{n-1}, U^{n-1}, K^{n-1}\right)$  on the time interval $[T-2 h, T-h]$. 
Repeating this procedure, we get a sequence $\left(Y^i, U^i, Z^i, K^i\right)_{i \leq n}$. 
It turns out that on $[(i-1)h,ih]$, $\alpha^{(i)}:=2^{\frac{p}{2}-1} \eta^{\frac{p}{2}} C_f^p \left(A_{ih}-A_{(i-1)h}\right)^{\frac{p-2}{p}} \int_{(i-1)h}^{ih} e^{p\beta A_s}  d A_s+ 2^{\frac{p}{2}-1} 2^{p-1} \left(\gamma_1^p+\gamma_2^p \mathbb{E}\left[ e^{p\beta A_T}\right]\right) < 1$ which implies the local well-posedness on $[(i-1)h,ih]$.
We stitch the sequence as:
$$
Y_t=\sum_{i=1}^n Y_t^i I_{[(i-1) h, i h)}(t)+Y_T^n I_{\{T\}}(t), \ U_t=\sum_{i=1}^n U_t^i I_{[(i-1) h, i h)}(t)+U_T^n I_{\{T\}}(t), 
$$
as well as
$$
K_t=K_t^i+\sum_{j=1}^{i-1} K_{j h}^j, \quad \text { for } t \in[(i-1) h, i h],  i \leq n.
$$
 It is  obvious that $(Y, U, K) \in \mathcal{E} \times \mathcal{H}_\nu^p \times \mathbb{K}^p$ is a deterministic flat solution to the mean-field reflected BSDE (\ref{reflected BSDE}).
The uniqueness of the global solution follows from the uniqueness of local solution on each small time interval. The proof is complete.

Applying the same reasoning on each time interval $[T-(i+1) \delta, T-i \delta]$ with a similar dynamics but terminal condition $Y_{T-i \delta}$ at time $T-i \delta$, we build recursively for $i=1$ to any $n$ a solution $(Y, U, K) \in$ $\mathcal{E} \times H_\nu^{2,p} \times \mathbb{K}^p$ on each time interval $[T-(i+1) \delta, T-i \delta]$. Pasting these processes, we derive a unique solution $(Y, U, K)$ satisfying (\ref{reflected BSDE}) on the full time interval $[0, T]$.
\end{proof}

\section{Unbounded terminal condition  and obstacle}
\label{section exp}
In this section, inspired by Hu, Moreau and Wang\cite{hu2022quadratic} we will deal with quadratic mean-field reflected BSDE (\ref{reflected BSDE}) with unbounded terminal condition and obstacle by $\theta$-method. We first state the main result of this section.
\begin{thm}
\label{ubdd thm}
Assume that (H1)-(H4) hold. If $\gamma_1$ and $\gamma_2$ satisfy
\begin{equation}
\label{quadratic condition}
4\left(\gamma_1+\gamma_2\right)<1,
\end{equation}

then the quadratic mean-field reflected BSDE (\ref{reflected BSDE}) admits a unique solution $(Y, U, K) \in \mathcal{E}\times {H}_{\nu}^{2,p} \times \mathbb{K}^p$.
\end{thm}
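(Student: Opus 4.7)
The plan is to implement a Picard--type fixed-point scheme on $\mathcal{E}$, using the $\theta$-method (as in Hu--Moreau--Wang) to absorb the exponential growth in $u$. Given $\bar Y \in \mathcal E$, I freeze the measure arguments and consider the non-mean-field quadratic reflected BSDE with driver $g(s,y,u):=f(s,y,u,\mathbb{P}_{\bar Y_s})$, obstacle $L_s:=h(s,\bar Y_s,\mathbb{P}_{\bar Y_s})$ and terminal $\xi$. Assumption (H4)(b) and the integrability bound (H3)(d) ensure $L \in \mathcal E$ and $\xi$ lies in the exponential class, so the well-posedness theory for quadratic RBSDEs driven by a marked point process (the jump analogue of Hu--Moreau--Wang, combined with the representation Lemma \ref{representation}) yields a unique solution $(\hat Y,\hat U,\hat K)\in\mathcal{E}\times H_\nu^{2,p}\times\mathbb{K}^p$. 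Define $\Phi(\bar Y):=\hat Y$; the first task is to check that $\Phi$ maps $\mathcal{E}$ into itself using the exponential transform and the growth bound (H3)(c).

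To obtain contraction, I take $Y^1,Y^2\in\mathcal{E}$, set $\hat Y^i:=\Phi(Y^i)$ and, for $\theta\in(0,1)$, study $\delta^\theta\hat Y:=\hat Y^1-\theta\hat Y^2$ (analogously $\delta^\theta\hat U$). Lemma \ref{representation} gives the optimal-stopping representation
$$\hat Y^i_t = \underset{\tau\in\mathcal S_{t,T}}{\operatorname{ess\,sup}}\ \mathcal{E}^{g^i}_{t,\tau}\bigl[\xi\mathbf{1}_{\{\tau=T\}}+h(\tau,Y^i_\tau,\mathbb{P}_{Y^i_\tau})\mathbf{1}_{\{\tau<T\}}\bigr], \qquad g^i(s,y,u)=f(s,y,u,\mathbb{P}_{Y^i_s}).$$
Using the convexity of $u\mapsto f(s,y,u,\mu)$ from (H3)(e), combined with the Lipschitz estimates (H3)(b) in $y,\mu$, the driver difference splits as
\begin{align*}
g^1(s,\hat Y^1_s,\hat U^1_s)-\theta g^2(s,\hat Y^2_s,\hat U^2_s)
&\leq (1-\theta)\Bigl[\tfrac{1}{\lambda}j_\lambda\!\bigl(s,\tfrac{\hat U^1_s-\theta\hat U^2_s}{1-\theta}\bigr)+\alpha_s+\beta\bigl(|\hat Y^1_s|+\mathcal{W}_1(\mathbb{P}_{Y^1_s},\delta_0)\bigr)\Bigr] \\
&\quad +\theta\beta\bigl(|\hat Y^1_s-\hat Y^2_s|+\mathcal{W}_1(\mathbb{P}_{Y^1_s},\mathbb{P}_{Y^2_s})\bigr),
\end{align*}
and (H4)(b) gives the analogous obstacle bound $h^1_\tau-\theta h^2_\tau\leq(1-\theta)|h(\tau,0,\delta_0)|+\gamma_1|\hat Y^1_\tau-\hat Y^2_\tau|+\gamma_2\mathcal W_1(\mathbb P_{Y^1_\tau},\mathbb P_{Y^2_\tau})$.

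Feeding these inequalities into the representation and applying the comparison principle for BSDEs driven by a marked point process yields an a priori bound of the form $\|\delta^\theta\hat Y\|_{\mathcal{E}}\leq (1-\theta)R(Y^1,Y^2)+(\gamma_1+\gamma_2)\|Y^1-Y^2\|_*$, where $R$ is an exponentially integrable residual controlled via (H3)(d) and $\|\cdot\|_*:=\sup_{\tau\in\mathcal{S}_{0,T}}\|\,\cdot_\tau\|_{L^p}$. Performing the symmetric estimate on $\hat Y^2-\theta\hat Y^1$ (this is where the concave case of (H3)(e) would be invoked) and combining via $(1+\theta)(\hat Y^1-\hat Y^2)=(\hat Y^1-\theta\hat Y^2)-(\hat Y^2-\theta\hat Y^1)$, then passing $\theta\uparrow1$, I expect to obtain a contraction inequality with constant $4(\gamma_1+\gamma_2)<1$. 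Banach's fixed point theorem then produces the unique $Y\in\mathcal{E}$, and $(U,K)\in H_\nu^{2,p}\times\mathbb{K}^p$ is recovered from the frozen-measure reflected BSDE.

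The main obstacle will be the $\theta$-estimate itself: controlling the term $(1-\theta)\,\lambda^{-1}j_\lambda(s,(\hat U^1-\theta\hat U^2)/(1-\theta))$ uniformly as $\theta\uparrow 1$, since naively the argument of $j_\lambda$ blows up. The standard remedy is to exploit (H3)(d) together with an exponential/BMO-type estimate for $\hat U^i$, showing that $\mathbb{E}[\exp\{p\lambda e^{\beta A_T}\sup_t|\hat Y^i_t|\}]<\infty$ transfers to an integrability bound on the quadratic form $j_\lambda$, uniformly for $\theta$ in a neighbourhood of $1$. Getting the constants sharp enough for the threshold $4(\gamma_1+\gamma_2)<1$, rather than a looser one, is the delicate technical point.
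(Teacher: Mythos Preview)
Your proposal correctly identifies the key ingredients---the representation Lemma~\ref{representation}, freezing the measure to reduce to a non-mean-field quadratic RBSDE, and the $\theta$-method exploiting (H3)(e)---but there is a genuine structural gap: the $\theta$-method does \emph{not} yield a Banach contraction for $\Phi$, and the paper does not prove existence via a fixed-point theorem.

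The displayed inequality you expect, $\|\delta_\theta\hat Y\|_{\mathcal{E}}\le(1-\theta)R+(\gamma_1+\gamma_2)\|Y^1-Y^2\|_*$, cannot hold: $\delta_\theta\hat Y=(\hat Y^1-\theta\hat Y^2)/(1-\theta)$ blows up as $\theta\uparrow1$ when $\hat Y^1\neq\hat Y^2$, while your right-hand side stays bounded. What the $\theta$-method actually produces (see the paper's uniqueness lemma) is a \emph{multiplicative} estimate of the form
\[
\mathbb{E}\Bigl[\exp\bigl\{p\lambda\sup_s\delta_\theta\bar{\hat Y}_s\bigr\}\Bigr]\le C\cdot\mathbb{E}\Bigl[\exp\bigl\{p\lambda\sup_s\delta_\theta\bar Y_s\bigr\}\Bigr]^{4(\gamma_1+\gamma_2+2\beta h)},
\]
with $\delta_\theta\bar Y$ built from the \emph{inputs} $Y^1,Y^2$. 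The exponent is $<1$, but this is not a contraction in any norm; the prefactor $C$ depends on exponential moments of $|\xi|$, $\tilde\chi$, etc., and does not vanish. Such an estimate is useful in exactly two ways: (i) for uniqueness, where $Y^i=\hat Y^i$, one solves the inequality for the common quantity and obtains a $\theta$-uniform bound, whence $Y^1-Y^2=(1-\theta)(\delta_\theta Y+Y^1)\to0$; (ii) for existence, one \emph{iterates} it along a Picard sequence $Y^{(m)}=\Phi(Y^{(m-1)})$, so that after $m$ steps the exponent $(4(\gamma_1+\gamma_2+2\beta h))^m\to0$ kills the dependence on the initial guess, giving a $\theta$-uniform bound on $\sup_q\mathbb{E}[\exp\{p\lambda\sup\delta_\theta\bar Y^{(m,q)}\}]$ for large $m$ (the paper's Lemma~\ref{estimate 2}), and hence Cauchy-ness of $(Y^{(m)})$.

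Your stated ``main obstacle''---controlling $(1-\theta)\lambda^{-1}j_\lambda(s,\delta_\theta\hat U)$ uniformly as $\theta\uparrow1$---is in fact a non-issue, and reflects a slight misuse of convexity. With $f(t,y,\mu,\cdot)$ concave (say), one has directly
\[
\delta_\theta f(t,u):=\tfrac{1}{1-\theta}\bigl(f(t,\cdot,(1-\theta)u+\theta u^{2,\tau})-\theta f(t,\cdot,u^{2,\tau})\bigr)\le f(t,\cdot,u)\le\alpha_t+\beta(\cdots)+\tfrac{1}{\lambda}j_\lambda(t,u),
\]
so $j_\lambda$ appears only with the generic argument $u$, never with the singular $\delta_\theta\hat U$. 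No BMO-type estimate on $\hat U$ is required for this step. The delicate point is rather the bookkeeping of the exponential-power estimates under iteration and the stitching over small subintervals $[T-h,T]$ to absorb the $\beta(A_T-A_t)$ terms---this is where the constant $4(\gamma_1+\gamma_2)$ emerges.

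In summary: keep your uniqueness argument (it is essentially the paper's), but for existence replace the Banach fixed-point scheme by an explicit Picard sequence and prove it is Cauchy via the iterated $\theta$-estimate.
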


In order to prove Theorem 4.1, we first recall some results about the solutions of quadratic RBSDEs driven by a marked point process. 
\hspace*{\fill}\\

\noindent(\textbf{H3''})

\textbf{(a) (Continuity condition)} 
For every $\omega \in \Omega,\ t \in[0, T],\ y \in \mathbb{R}$, $u\in L^2(\mathcal{B}(E),\nu_t;\mathbb R)$, $(y, u) \longrightarrow f(t, y, u)$ is continuous.
\hspace*{\fill}\\
\hspace*{\fill}\\

\textbf{(b) \textcolor{red}{(Lipschitz condition in $y$)}} 
There exist $\tilde{\beta}\geq 0$, such that for every $\omega \in \Omega,\ t \in[0, T],\ y, y^{\prime} \in \mathbb{R}$, $u\in L^2(\mathcal{B}(E),\nu_t;\mathbb R)$, we have
$$
\begin{aligned}
& \left|f(\omega, t, y, u(\cdot))-f\left(\omega, t, y^{\prime}, u(\cdot)\right)\right| \leq \tilde{\beta}\left|y-y^{\prime}\right|.
\end{aligned}
$$
\hspace*{\fill}\\
\hspace*{\fill}\\

\textbf{(c) \textcolor{red}{(Quadratic-exponential growth condition)}}
For all $t\in[0, T]$, $(y,u) \in \mathbb{R} \times  L^2(\mathcal{B}(E),\nu_t;\mathbb R):\ \mathbb{P}$-a.s.,
$$
\underline{q}(t, y, u)=-\frac{1}{\lambda} j_{\lambda}(t,- u)-\alpha_t-\beta|y| \leq f(t, y, u) \leq \frac{1}{\lambda} j_{\lambda}(t, u)+\alpha_t+\beta|y|=\bar{q}(t, y, u) .
$$
where $\{\alpha_t\}_{0 \leq t \leq T}$ is  a progressively measurable nonnegative stochastic process.
\hspace*{\fill}\\
\hspace*{\fill}\\

\textbf{(d) Integrability condition} We assume necessarily,
$$
\forall p>0, \quad \mathbb{E}\left[\exp \left\{p \lambda e^{\beta A_T}(|\xi|\vee L_*)+p\lambda  \int_0^T e^{\beta A_s}\alpha_s d A_s\right\}+\int_0^T\alpha_s^2dA_s\right]<\infty.
$$
\hspace*{\fill}\\

\textbf{(e) (Convexity/Concavity condition)}
For each $(t, y) \in[0, T] \times \mathbb{R}$, $u\in L^2(\mathcal{B}(E),\nu_t;\mathbb R),\ u \rightarrow f(t, y, u)$ is convex or concave.
\hspace*{\fill}\\

\begin{remark}
\label{rmk solutions}
Suppose that assumptions (H1), (H2) and (H3'') hold, then the quadratic BSDE (\ref{BSDE}) 
\begin{equation}
\label{BSDE}
y_t^\tau= \eta + \int_t^\tau f\left(s, y_s^\tau , u_s^\tau\right)dA_s-\int_t^\tau \int_E u_s(e) q(d s,d e)
\end{equation}
admits a unique solution $(y^\tau, u^\tau) \in \mathcal E\times H_\nu^{2,p}$ by Theorem 3.1 in \cite{2310.14728}. In particular, the comparison principle also holds by Theorem 3.1 in \cite{2023arXiv231020361L} (choose obstacle as $-\infty$). Besides, we also have the conclusion that under assumptions (H1), (H2) and (H3''), the quadratic reflected BSDE (\ref{reflected BSDE normal}) 
\begin{equation}
\label{reflected BSDE normal}
\left\{\begin{array}{l}
Y_t=\eta+\int_t^T f\left(s,Y_s, U_s\right) d A_s +\int_t^T d K_s-\int_t^T \int_E U_s(e) q(d s, d e), \quad 0 \leq t \leq T, \\
Y_t \geq L_t, \quad 0 \leq t \leq T, \\
\int_0^T\left(Y_{s^{-}}-L_{s^{-}}\right) d K_s=0.
\end{array}\right.
\end{equation}
admits a unique solution $(Y, U, K) \in \mathcal E\times H_{\nu}^{2,p}\times \mathbb K^p$ by Theorems 2.1 in \cite{2023arXiv231020361L}.
\end{remark} 

There are several useful a priori estimates which can be found from  lemma3.3 in \cite{2310.14728}. For the convenience of readers, we restate this lemma here.

\begin{lemma}
\label{priori estimates}
 Under assumptions (H1) and (H2), assume that $(y^\tau, u^\tau)$ is a solution to BSDE (\ref{BSDE}). Suppose that for some $p\ge 1$,
\begin{equation}
\label{integrability_condition}
\mathbb{E}\left[\exp \left\{pe^{\beta A_T}\lambda|\xi|+ p\lambda \int_0^Te^{\beta A_t} \alpha_t d t\right\}\right]<\infty.
\end{equation}
Then, we have the following a priori estimates.

(i) If $-\alpha_t-\beta|y_t|-\frac{1}{\lambda}j_\lambda(t,-u_t)\le f(t,y, u) \leq \alpha_t+\beta|y_t|+\frac{1}{\lambda}j_\lambda(t,u_t)$, then for each $t \in[0, T]$,
$$
\exp \left\{p \lambda\left|y_t^\tau\right|\right\} \leq \mathbb{E}_t\left[\exp \left\{p \lambda e^{\beta A_T}|\xi|+p\lambda  \int_t^T e^{\beta A_s}\alpha_s d A_s\right\}\right].
$$
Hence, with the help of Doob's inequality,  for each $p>0$, $E[e^{py_*}]$ is uniformly bounded. The bound is denoted by $\Xi(p)$.

(ii) If $f(t,y,u) \leq \alpha_t+\beta |y_t|+\frac{1}{\lambda}j_{\lambda}(t,u_t)$, then for each $t \in[0, T]$,
$$
\exp \left\{p \lambda\left(y_t^\tau\right)^{+}\right\} \leq \mathbb{E}_t\left[\exp \left\{p\lambda e^{\beta A_T} \xi^{+}+p\lambda  \int_t^T e^{\beta A_s}\alpha_s d A_s\right\}\right].
$$
\end{lemma}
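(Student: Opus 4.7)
The proof is a standard exponential-estimate argument adapted to the marked-point-process setting: apply Itô's formula to a weighted exponential of $y^\tau$, use the growth condition, and show that after compensating by an explicit continuous-FV factor one obtains a local submartingale. The main subtlety is that the Itô calculation produces a jump-compensator term involving $j_{p\lambda e^{\beta A_t}}$, not $j_\lambda$, so we need a convexity step to compare these.

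\textbf{Part (ii).} Set $\Phi(t,y) := \exp\{p\lambda e^{\beta A_t}y\}$ and consider
$$
M_t := \Phi\bigl(t,(y_t^\tau)^+\bigr)\cdot \exp\Bigl\{p\lambda\int_0^t e^{\beta A_s}\alpha_s\,dA_s\Bigr\}.
$$
On the set $\{y_t^\tau > 0\}$, Itô's formula applied to $\Phi(t, y_t^\tau)$ (handling the non-smoothness at $0$ by a standard regularization $\sqrt{y^2+\varepsilon}$ and letting $\varepsilon\to 0$; there is no local-time contribution since the martingale part is purely discontinuous) produces three drift contributions with respect to $dA_t$: the $\partial_t$ term $p\lambda\beta e^{\beta A_t}y\,\Phi$; the $\partial_y$ term contracted with the continuous part of $y_t^\tau$, which equals $-\Phi\,p\lambda e^{\beta A_t}[f(t,y,u)+\int_E u(e)\phi_t(de)]$ (the extra integral coming from the compensator of the jump martingale $\int u\,dq$); and the jump compensator $\Phi\int_E(e^{p\lambda e^{\beta A_t}u(e)}-1)\phi_t(de)$. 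Combining the two $u$-linear pieces, the total drift simplifies to $\Phi\cdot\bigl[p\lambda e^{\beta A_t}(\beta y - f(t,y,u)) + j_{p\lambda e^{\beta A_t}}(t,u)\bigr]\,dA_t$.

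\textbf{Key convexity inequality.} For $a\ge 1$ and every $x\in\mathbb{R}$, $g(x):=e^x-1-x$ satisfies $g(ax)\ge a\,g(x)$: indeed $h(a)=g(ax)-a\,g(x)$ has $h(1)=0$, $h'(1)=(x-1)e^x+1\ge 0$ (minimum attained at $x=0$), and $h''(a)=x^2 e^{ax}\ge 0$. Setting $a:=p e^{\beta A_t}\ge 1$ and integrating against $\phi_t(de)$ yields $j_{p\lambda e^{\beta A_t}}(t,u)\ge p e^{\beta A_t}\,j_\lambda(t,u)$. Combined with the upper growth bound $f(t,y,u)\le \alpha_t+\beta|y|+\tfrac{1}{\lambda}j_\lambda(t,u)$, valid with $|y|=y$ on $\{y>0\}$, the drift of $\Phi(t,y_t^\tau)$ is bounded below by $-p\lambda e^{\beta A_t}\alpha_t\,\Phi\,dA_t$. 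Since the multiplying factor $\exp\{p\lambda\int_0^t e^{\beta A_s}\alpha_s\,dA_s\}$ is continuous of finite variation and cancels exactly this negative drift, $M_t$ is a local submartingale. Localizing along a reducing sequence, taking $\mathbb{E}_t$ between $t$ and $\tau$, and passing to the limit via the integrability assumption (which gives uniform integrability by dominated convergence) yields $M_t\le \mathbb{E}_t[M_\tau]$; dividing out the $\mathcal{F}_t$-measurable factor $\exp\{p\lambda\int_0^t e^{\beta A_s}\alpha_s\,dA_s\}$ gives the stated inequality.

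\textbf{Part (i) and the uniform bound.} For part (i), apply part (ii) once to $(y^\tau,u^\tau)$ to control $(y^\tau)^+$. Then observe that $(-y^\tau,-u^\tau)$ solves a BSDE with terminal $-\xi$ and driver $\tilde f(t,y,u):=-f(t,-y,-u)$; the lower bound in the growth condition translates into $\tilde f(t,y,u)\le \alpha_t+\beta|y|+\tfrac{1}{\lambda}j_\lambda(t,u)$, so part (ii) applied to this BSDE controls $(y^\tau)^-$. Combining the two bounds via $e^{p\lambda|y|}\le e^{p\lambda y^+}+e^{p\lambda y^-}$ and absorbing the constant (using $|\xi|$ in place of $\xi^\pm$ on the right) yields the claim; note also $e^{p\lambda|y_t|}\le e^{p\lambda e^{\beta A_t}|y_t|}$ since $e^{\beta A_t}\ge 1$. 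The uniform bound $\mathbb{E}[e^{p y_*}]\le \Xi(p)$ follows by taking $\tau=T$, applying Doob's $L^q$-inequality to the martingale $\mathbb{E}_t[\exp\{p\lambda e^{\beta A_T}|\xi|+p\lambda\int_0^T e^{\beta A_s}\alpha_s\,dA_s\}]$, and using the assumed exponential integrability with an enlarged exponent.

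\textbf{Main obstacle.} The delicate point is matching the Itô-generated compensator $j_{p\lambda e^{\beta A_t}}$ against the $j_\lambda$-term in the growth assumption, which is exactly what the inequality $g(ax)\ge a\,g(x)$ for $a\ge 1$ provides; without the extra multiplicative factor $p e^{\beta A_t}\ge 1$, the signs would not cooperate. A secondary issue is the lack of $C^2$ regularity of $y\mapsto y^+$ (and $|y|$), but since the martingale part has no continuous component, a smooth approximation closes the argument.
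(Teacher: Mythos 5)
The paper does not actually prove this lemma: it is imported verbatim from Lemma 3.3 of the cited preprint (``we restate this lemma here''), so there is no in-paper argument to compare against. Your proof is the standard Briand--Hu--type exponential-transform argument adapted to the marked point process setting, and its core is sound: the It\^o computation for $\exp\{p\lambda e^{\beta A_t}y_t\}$ correctly produces the drift $\Phi\,[\,p\lambda e^{\beta A_t}(\beta y-f)+j_{p\lambda e^{\beta A_t}}(t,u)\,]\,dA_t$, the inequality $g(ax)\ge a\,g(x)$ for $a\ge 1$ (hence $j_{p\lambda e^{\beta A_t}}\ge p e^{\beta A_t} j_\lambda$) is verified correctly and is exactly the right device to absorb the $\tfrac{1}{\lambda}j_\lambda$ term of the growth bound, and the absence of a continuous martingale part does dispose of the local-time issue when smoothing $y\mapsto y^+$. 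The reduction of (i) to (ii) via $(-y^\tau,-u^\tau)$ and the driver $\tilde f(t,y,u)=-f(t,-y,-u)$ is also correct.

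Two points need tightening. First, the passage to the limit after localization is not justified by the integrability condition \eqref{integrability_condition} alone: that condition controls $\xi$ and $\alpha$, but the localized quantity $M_{\tau\wedge\sigma_n}$ involves $(y^\tau_{\tau\wedge\sigma_n})^{+}$ at an intermediate time, which is not a priori dominated by data. You must invoke the fact that the solution is taken in $\mathcal{E}$ (so $\sup_t e^{c|y_t^\tau|}$ is integrable for every $c$, cf.\ Remark \ref{rmk solutions}), or run a truncation/bootstrap; ``uniform integrability by dominated convergence'' from \eqref{integrability_condition} as written is a genuine (if easily repaired) gap. Second, combining the bounds on $(y^\tau)^+$ and $(y^\tau)^-$ via $e^{p\lambda|y|}\le e^{p\lambda y^+}+e^{p\lambda y^-}$ yields a spurious factor of $2$; since both one-sided estimates are dominated by the same right-hand side $\mathbb{E}_t[\exp\{p\lambda e^{\beta A_T}|\xi|+p\lambda\int_t^T e^{\beta A_s}\alpha_s\,dA_s\}]$ and at each $(t,\omega)$ one of $y^{\pm}$ vanishes, you should use $e^{p\lambda|y|}=\max\bigl(e^{p\lambda y^+},e^{p\lambda y^-}\bigr)$ to obtain the constant-free inequality stated in (i).
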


\begin{remark}
As in \cite{2310.14728}, Corollary 3.4, if the generator satisfies $f(t,y,u)\equiv f(t,u)$, $-\alpha_t-\frac{1}{\lambda}j_\lambda(t,-u_t)\le f(t, u) \leq \alpha_t+\frac{1}{\lambda}j_\lambda(t,u_t)$ and there is a constant $p \geq 1$ such that,
\begin{equation}
\mathbb{E}\left[\exp \left\{p\lambda|\xi|+ p\lambda \int_0^T \alpha_t d t\right\}\right]<\infty.
\end{equation}
Then, we have
$$
\exp \left\{p \lambda\left|y_t\right|\right\} \leq \mathbb{E}_t\left[\exp \left\{p \lambda |\xi|+p\lambda  \int_t^T \alpha_s d A_s\right\}\right].
$$
\end{remark}

Now we are going to prove the uniqueness of the solution of the quadratic mean-field reflected BSDE (\ref{reflected BSDE}).

\begin{lemma}
Assume that all the conditions of Theorem \ref{ubdd thm} hold. Then, the quadratic mean-field reflected BSDE (\ref{reflected BSDE}) has at most one solution $(Y, U, K) \in \mathcal{E}\times {H}_{\nu}^{2,p} \times \mathbb{K}^p$.
\end{lemma}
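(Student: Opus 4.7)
The plan is to follow the $\theta$-method, in the spirit of \cite{hu2022quadratic}. Let $(Y^i, U^i, K^i)_{i=1,2}$ be two solutions in $\mathcal{E} \times H_\nu^{2,p} \times \mathbb{K}^p$. For $\theta \in (0,1)$ I introduce $\delta^\theta Y := Y^1 - \theta Y^2$, $\delta^\theta U := U^1 - \theta U^2$, $\delta^\theta K := K^1 - \theta K^2$, so that
$$
\delta^\theta Y_t = (1-\theta)\xi + \int_t^T \bigl[f(s, Y^1_s, U^1_s, \mathbb{P}_{Y^1_s}) - \theta f(s, Y^2_s, U^2_s, \mathbb{P}_{Y^2_s})\bigr] dA_s + \int_t^T d(\delta^\theta K)_s - \int_t^T\!\!\int_E \delta^\theta U_s(e)\, q(ds, de).
$$
The target is to show $Y^1 \le Y^2$ a.s.; interchanging the roles of the two solutions then forces $Y^1 = Y^2$, after which uniqueness of $(U, K)$ follows from Remark \ref{rmk solutions} applied to the (non mean-field) quadratic RBSDE driven by $f(\cdot, Y, \cdot, \mathbb{P}_{Y})$ with obstacle $h(\cdot, Y, \mathbb{P}_{Y})$, together with the uniqueness of the nonlinear Doob--Meyer decomposition.

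The core step is to dominate the driver difference by a linearized expression plus a $(1-\theta)$ residual. Using the convexity/concavity (H3)(e), assume WLOG convexity in $u$, so that
$$
f(s, Y^1_s, \theta U^2_s, \mathbb{P}_{Y^1_s}) \leq \theta f(s, Y^1_s, U^2_s, \mathbb{P}_{Y^1_s}) + (1-\theta) f(s, Y^1_s, 0, \mathbb{P}_{Y^1_s}).
$$
Inserting this intermediate value and applying the $(y,\mu)$-Lipschitz bound in (H3)(b) gives
$$
f(s, Y^1_s, U^1_s, \mathbb{P}_{Y^1_s}) - \theta f(s, Y^2_s, U^2_s, \mathbb{P}_{Y^2_s}) \leq \int_E \widehat{V}_s(e)\, \delta^\theta U_s(e)\, \phi_s(de) + \beta\bigl(|\delta^\theta Y_s| + \mathcal{W}_1(\mathbb{P}_{Y^1_s}, \mathbb{P}_{\theta Y^2_s})\bigr) + (1-\theta)\, \Lambda_s,
$$
where $\widehat{V}_s$ is a subgradient of the convex map $u\mapsto f(s, Y^1_s, u, \mathbb{P}_{Y^1_s})$ along the segment $[\theta U^2, U^1]$, and $\Lambda_s := |f(s, Y^1_s, 0, \mathbb{P}_{Y^1_s})| + \beta|Y^2_s|$, which is integrable by (H3)(c)-(d). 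For the obstacle I use the decomposition
$$
h(t, Y^1_t, \mathbb{P}_{Y^1_t}) - \theta h(t, Y^2_t, \mathbb{P}_{Y^2_t}) = \bigl[h(t, Y^1_t, \mathbb{P}_{Y^1_t}) - h(t, \theta Y^2_t, \mathbb{P}_{\theta Y^2_t})\bigr] + \bigl[h(t, \theta Y^2_t, \mathbb{P}_{\theta Y^2_t}) - \theta h(t, Y^2_t, \mathbb{P}_{Y^2_t})\bigr],
$$
bounded by $\gamma_1 |\delta^\theta Y_t| + \gamma_2\, \mathbb{E}[|\delta^\theta Y_t|] + (1-\theta) M_t$ via (H4)(b) and $|\theta Y^2 - Y^2| = (1-\theta)|Y^2|$, with $M \in \mathcal{E}$.

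Next I would invoke the representation formula of Lemma \ref{representation} for the reflected BSDE satisfied by $\delta^\theta Y$ (viewed as an optimal stopping problem under a nonlinear $g$-expectation with linearized driver $\int_E \widehat{V}_s \cdot \delta^\theta U_s \phi_s(de)$) and combine it with the exponential a priori estimates of Lemma \ref{priori estimates} to arrive at an inequality of the form
$$
\bigl\|(\delta^\theta Y)^+\bigr\|_\star \leq 2(\gamma_1 + \gamma_2)\, \bigl\|(\delta^\theta Y)^+\bigr\|_\star + C(1-\theta),
$$
in an appropriate norm $\|\cdot\|_\star$ mixing $\sup_t$ and $\mathbb{E}$. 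The second factor of $2$ appears after symmetrizing (since one must also dominate $(Y^2 - \theta Y^1)^+$), giving the coefficient $4(\gamma_1 + \gamma_2)$ that is absorbed precisely because of the standing hypothesis \eqref{quadratic condition}. Sending $\theta \uparrow 1$ then yields $Y^1 \leq Y^2$, and symmetry closes the argument. The main obstacle is the exponential growth of $f$ in $u$: the subgradient $\widehat{V}$ is not a priori bounded, so the linearized driver is not Lipschitz in the classical $H_\nu^{2,p}$ sense. Controlling it requires the exponential change-of-measure/Girsanov-type estimate tailored to the marked point process encoded in Lemma \ref{priori estimates}, together with the full strength of the integrability condition (H3)(d) on $e^{p\lambda e^{\beta A_T}(|\xi|\vee h_*)}$ to absorb the resulting stochastic exponentials.
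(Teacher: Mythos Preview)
Your proposal has a structural gap and misidentifies the key tool. The difference process $\delta^\theta Y = Y^1 - \theta Y^2$ does \emph{not} solve a reflected BSDE: the increment $\delta^\theta K = K^1 - \theta K^2$ is in general neither monotone nor linked to any obstacle through a Skorokhod condition, so Lemma \ref{representation} cannot be invoked for it. Your plan to ``view'' it as an optimal stopping problem is therefore not justified. Moreover, Lemma \ref{priori estimates} is not a Girsanov/change-of-measure device for absorbing the unbounded subgradient $\widehat V$; it is an a priori exponential bound for BSDE solutions whose driver satisfies the growth condition $f\le \alpha_t + \beta|y| + \tfrac{1}{\lambda} j_\lambda(t,u)$, and it simply does not apply to a linearised driver of the form $\int_E \widehat V_s(e)\, u(e)\,\phi_s(de)$.

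The paper avoids both obstructions by changing the order of operations. It first uses Lemma \ref{representation} on each $Y^i$ separately, writing $Y^i_t = \operatorname*{ess\,sup}_{\tau} y^{i,\tau}_t$ with $y^{i,\tau}$ solving a \emph{non-reflected} BSDE; then it applies the $\theta$-method at the BSDE level with the \emph{normalised} quotients $\delta_\theta y^\tau = (y^{1,\tau}-\theta y^{2,\tau})/(1-\theta)$ and the symmetric $\delta_\theta\widetilde y^\tau$. Convexity is exploited not through a subgradient linearisation but through the elementary inequality
\[
\frac{f\bigl(\cdot,(1-\theta)u+\theta u^{2,\tau}\bigr)-\theta f(\cdot,u^{2,\tau})}{1-\theta}\ \le\ f(\cdot,u)\ \le\ \alpha_t+\beta\bigl(|Y^2_t|+\mathbb E[|Y^2_t|]\bigr)+\tfrac{1}{\lambda}j_\lambda(t,u),
\]
which puts the $\theta$-modified driver back under the growth hypothesis of Lemma \ref{priori estimates}(ii). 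This yields an exponential bound for $(\delta_\theta y^\tau_t)^+$, hence for $(\delta_\theta Y_t)^+$ after taking $\operatorname*{ess\,sup}_\tau$, and the negative part is handled via $(\delta_\theta Y)^- \le (\delta_\theta\widetilde Y)^+ + 2|Y^2|$. A final, essential ingredient you omit is the subdivision of $[0,T]$ into small pieces of length $h$ so that $4(\gamma_1+\gamma_2+2\beta\|A_{ih}-A_{(i-1)h}\|_\infty)<1$; only then can the resulting inequality in exponential norms be closed and $\theta\to 1$ taken on each subinterval.
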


\begin{proof}
For $i=1,2$, let $\left(Y^i, U^i, K^i\right) \in \mathcal{E} \times H_\nu^{2,p} \times \mathbb{K}^p$ are solutions to the quadratic mean-field reflected BSDE (\ref{reflected BSDE}). From Lemma \ref{representation} and Remark \ref{rmk solutions}, we have
$$
Y_t^i:=\underset{\tau \in \mathcal{T}_t}{\operatorname{ess} \sup }\ y_t^{i, \tau}, \quad \forall t \in[0, T],
$$
where $y_t^{i, \tau}$ is the solution of the following quadratic BSDE
$$
y_t^{i, \tau}=\xi \mathbf{1}_{\{\tau=T\}}+h\left(\tau, Y_\tau^i,\left(\mathbb{P}_{Y_s^i}\right)_{s=\tau}\right) \mathbf{1}_{\{\tau<T\}}+\int_t^\tau f\left(s, Y_s^i, \mathbb{P}_{Y_s^i}, u_s^{i, \tau}\right) d A_s-\int_t^\tau \int_E u_s^{i, \tau} q(ds, de).
$$
Without loss of generality, assume that $f(t, y, \mu, \cdot)$ is concave, for each $\theta \in(0,1)$, denote by
$$
\delta_\theta \ell=\frac{ \ell^1- \theta\ell^2}{1-\theta}, \delta_\theta \widetilde{\ell}=\frac{ \ell^2-\theta \ell^1}{1-\theta} \text { and } \delta_\theta \bar{Y}:=\left|\delta_\theta Y\right|+\left|\delta_\theta \widetilde{Y}\right|
$$
for $\ell=Y, y^\tau$ and $u^\tau$. Then, the pair of processes $\left(\delta_\theta y^\tau, \delta_\theta u^\tau\right)$ satisfies the following BSDE:
\begin{equation}
\label{delta BSDE}
\delta_\theta y_t^\tau=\delta_\theta \eta+\int_t^\tau\left(\delta_\theta f\left(s, \delta_\theta u_s^\tau\right)+\delta_\theta f_0(s)\right) d A_s-\int_t^\tau  \int_E \delta_\theta u_s^\tau q(ds,de),
\end{equation}
where the terminal condition and generator are given by
$$
\begin{aligned}
& \delta_\theta \eta=\xi \mathbf{1}_{\{\tau=T\}}+\frac{ h\left(\tau, Y_\tau^1,\left(\mathbb{P}_{Y_s^1}\right)_{s=\tau}\right)- \theta h\left(\tau, Y_\tau^2,\left(\mathbb{P}_{Y_s^2}\right)_{s=\tau}\right)}{1-\theta} \mathbf{1}_{\{\tau<T\}}, \\
& \delta_\theta f_0(t)=\frac{1}{1-\theta}\left(f\left(t, Y_t^1, \mathbb{P}_{Y_t^1}, u_t^{1, \tau}\right)-f\left(t, Y_t^2, \mathbb{P}_{Y_t^2}, u_t^{1, \tau}\right)\right), \\
& \delta_\theta f(t, u)=\frac{1}{1-\theta}\left( f\left(t, Y_t^2, \mathbb{P}_{Y_t^2},(1-\theta) u+\theta u_t^{2, \tau}\right)-\theta f\left(t, Y_t^2, \mathbb{P}_{Y_t^2}, u_t^{2, \tau}\right)\right) .
\end{aligned}
$$
Recalling assumptions (H3) and (H4), we have that
$$
\begin{aligned}
& \left.\left.\delta_\theta \eta \leq|\xi|+\left|h\left(\tau, 0, \delta_0\right)\right|+\gamma_1\left(2\left|Y_\tau^2\right|+\left|\delta_\theta Y_\tau\right|\right)+\gamma_2\left(2 \mathbb{E}\left[\left|Y_s^2\right|\right]\right]_{s=\tau}+\mathbb{E}\left[\left|\delta_\theta Y_s\right|\right]_{s=\tau}\right)\right), \\
& \delta_\theta f_0(t) \leq \beta\left(\left|Y_t^2\right|+\left|\delta_\theta Y_t\right|+\mathbb{E}\left[\left|Y_t^2\right|\right]+\mathbb{E}\left[\left|\delta_\theta Y_t\right|\right]\right), \\
& \delta_\theta f(t, u) \leq f\left(t, Y_t^2, \mathbb{P}_{Y_t^2},u\right) \leq \alpha_t+\beta\left(\left|Y_t^2\right|+\mathbb{E}\left[\left|Y_t^2\right|\right]\right)+\frac{1}{\lambda}j_\lambda(t, u).
\end{aligned}
$$
Set $C_1:=\sup _{i \in\{1,2\}} \mathbb{E}\left[\sup _{s \in[0, T]}\left|Y_s^i\right|\right]$ and
$$
\begin{aligned}
& \chi=\int_0^T \alpha_s d A_s+\sup _{s \in[0, T]}\left|h\left(s, 0, \delta_0\right)\right|+2\left(\gamma_1+\beta A_T\right)\left(\sup _{s \in[0, T]}\left|Y_s^1\right|+\sup _{s \in[0, T]}\left|Y_s^2\right|\right)+2\left(\gamma_2+\beta A_T\right) C_1, \\
& \widetilde{\chi}=\int_0^T \alpha_s d A_s+\sup _{s \in[0, T]}\left|h\left(s, 0, \delta_0\right)\right|+2\left(1+\gamma_1+\beta A_T\right)\left(\sup _{s \in[0, T]}\left|Y_s^1\right|+\sup _{s \in[0, T]}\left|Y_s^2\right|\right)+2\left(\gamma_2+\beta A_T\right) C_1.
\end{aligned}
$$
Using assertion (ii) of Lemma \ref{priori estimates} to (\ref{delta BSDE}), we derive that for any $p \geq 1$
$$
\begin{aligned}
& \exp \left\{p \gamma\left(\delta_\theta y_t^\tau\right)^{+}\right\} \\
& \leq \mathbb{E}_t\left[\exp \left\{p \gamma\left(|\xi|+\chi+\left(\gamma_1+\beta(A_T-A_t)\right) \sup _{s \in[t, T]}\left|\delta_\theta Y_s\right|+\left(\gamma_2+\beta(A_T-A_t)\right) \sup _{s \in[t, T]} \mathbb{E}\left[\left|\delta_\theta Y_s\right|\right]\right)\right\}\right],
\end{aligned}
$$
which indicates that
\begin{equation}
\begin{aligned}
\label{estimate Y}
& \exp \left\{p \gamma\left(\delta_\theta Y_t\right)^{+}\right\} \leq \underset{\tau \in \mathcal{T}_t}{\operatorname{ess} \sup \exp }\left\{p \gamma\left(\delta_\theta y_t^\tau\right)^{+}\right\} \\
& \leq \mathbb{E}_t\left[\exp \left\{p \gamma\left(|\xi|+\chi+\left(\gamma_1+\beta(A_T-A_t)\right) \sup _{s \in[t, T]}\left|\delta_\theta Y_s\right|+\left(\gamma_2+\beta(A_T-A_t)\right) \sup _{s \in[t, T]} \mathbb{E}\left[\left|\delta_\theta Y_s\right|\right]\right)\right\}\right].
\end{aligned}  
\end{equation}

Using a similar method, we derive that
\begin{equation}
\begin{aligned}
\label{estimate Y tilde}
& \exp \left\{p \gamma\left(\delta_\theta \widetilde{Y}_t\right)^{+}\right\} \\
& \leq \mathbb{E}_t\left[\exp \left\{p \gamma\left(|\xi|+\chi+\left(\gamma_1+\beta(A_T-A_t)\right) \sup _{s \in[t, T]}\left|\delta_\theta \widetilde{Y}_s\right|+\left(\gamma_2+\beta(A_T-A_t)\right) \sup _{s \in[t, T]} \mathbb{E}\left[\left|\delta_\theta \widetilde{Y}_s\right|\right]\right)\right\}\right] .
\end{aligned} 
\end{equation}

In view of the fact that
$$
\left(\delta_\theta Y\right)^{-} \leq\left(\delta_\theta \widetilde{Y}\right)^{+}+2\left|Y^2\right| \text { and }\left(\delta_\theta \widetilde{Y}\right)^{-} \leq\left(\delta_\theta Y\right)^{+}+2\left|Y^1\right|,
$$
and recalling (\ref{estimate Y}) and (\ref{estimate Y tilde}), we have
$$
\begin{aligned}
& \exp \left\{p \gamma\left|\delta_\theta Y_t\right|\right\} \vee \exp \left\{p \gamma\left|\delta_\theta \widetilde{Y}_t\right|\right\} \leq \exp \left\{p \gamma\left(\left(\delta_\theta Y_t\right)^{+}+\left(\delta_\theta \widetilde{Y}_t\right)^{+}+2\left|Y_t^1\right|+2\left|Y_t^2\right|\right)\right\} \\
& \leq \mathbb{E}_t\left[\exp \left\{p \gamma\left(|\xi|+\widetilde{\chi}+\left(\gamma_1+\beta(A_T-A_t)\right) \sup _{s \in[t, T]} \delta_\theta \bar{Y}_s+\left(\gamma_2+\beta(A_T-A_t)\right) \sup _{s \in[t, T]} \mathbb{E}\left[\delta_\theta \bar{Y}_s\right]\right)\right\}\right]^2.
\end{aligned}
$$
Applying Doob's maximal inequality and Hölder's inequality, we get that for each $p \geq 1$ and $t \in[0, T]$
$$
\begin{aligned}
& \mathbb{E}\left[\exp \left\{p \gamma \sup _{s \in[t, T]} \delta_\theta \bar{Y}_s\right\}\right] \leq \mathbb{E}\left[\exp \left\{p \gamma \sup _{s \in[t, T]}\left|\delta_\theta Y_s\right|\right\} \exp \left\{p \gamma \sup _{s \in[t, T]}\left|\delta_\theta \tilde{Y}_s\right|\right\}\right] \\
& \leq 4 \mathbb{E}\left[\exp \left\{4 p \gamma\left(|\xi|+\tilde{\chi}+\left(\gamma_1+\beta(A_T-A_t)\right) \sup _{s \in[t, T]} \delta_\theta \bar{Y}_s+\left(\gamma_2+\beta(A_T-A_t)\right) \sup _{s \in[t, T]} \mathbb{E}\left[\delta_\theta \bar{Y}_s\right]\right)\right\}\right] \\
& \leq 4 \mathbb{E}\left[\exp \left\{4 p \gamma\left(|\xi|+\tilde{\chi}+\left(\gamma_1+\beta(A_T-A_t)\right) \sup _{s \in[t, T]} \delta_\theta \bar{Y}_s\right)\right\}\right] \mathbb{E}\left[\exp \left\{4 p \gamma\left(\gamma_2+\beta(A_T-A_t)\right) \sup _{s \in[t, T]} \delta_\theta \bar{Y}_s\right\}\right],
\end{aligned}
$$
where we used Jensen's inequality in the last inequality.

In view of $4\left(\gamma_1+\gamma_2\right)<1$, there exist two constants $h \in(0, T]$ and $\nu>1$ depending only on $\beta, \gamma_1$ and $\gamma_2$ such that
$$
\max_{1\le i\le N}\{4\left(\gamma_1+\gamma_2+2 \beta \|A_{ih}-A_{(i-1)h}\|_\infty\right)\}<1 \text { and } \max_{1\le i\le N}\{4 \nu\left(\gamma_1+\beta \|A_{ih}-A_{(i-1)h}\|_\infty\right)\}<1,
$$
in which $T=Nh$.
In the spirit of Hölder's inequality, we derive that for any $p \geq 1$
$$
\begin{aligned}
& \mathbb{E}\left[\exp \left\{p \gamma \sup _{s \in[T-h, T]} \delta_\theta \bar{Y}_s\right\}\right] \\
& \leq 4 \mathbb{E}\left[\exp \left\{\frac{4 \nu p \gamma}{\nu-1}(|\xi|+\tilde{\chi})\right\}\right]^{\frac{\nu-1}{\nu}} \mathbb{E}\left[\exp \left\{4 \nu p \gamma\left(\gamma_1+\beta \|A_{ih}-A_{(i-1)h}\|_\infty\right) \sup _{s \in[t, T]} \delta_\theta \bar{Y}_s\right\}\right]^{\frac{1}{\nu}} \\
& \quad \times \mathbb{E}\left[\exp \left\{p \gamma \sup _{s \in[t, T]} \delta_\theta \bar{Y}_s\right\}\right]^{4\left(\gamma_2+\beta \|A_{ih}-A_{(i-1)h}\|_\infty\right)} \\
& \leq 4 \mathbb{E}\left[\exp \left\{\frac{4 \nu p \gamma}{\nu-1}(|\xi|+\tilde{\chi})\right\}\right]^{\frac{\nu-1}{\nu}} \mathbb{E}\left[\exp \left\{p \gamma \sup _{s \in[T-h, T]} \delta_\theta \bar{Y}_s\right\}\right]^{4\left(\gamma_1+\gamma_2+2 \beta \|A_{ih}-A_{(i-1)h}\|_\infty\right)},
\end{aligned}
$$
which together with the fact that $4\left(\gamma_1+\gamma_2+2 \beta \|A_{ih}-A_{(i-1)h}\|_\infty\right)<1$ implies that for any $p \geq 1$ and $\theta \in(0,1)$,
$$
\mathbb{E}\left[\exp \left\{p \gamma \sup _{s \in[T-h, T]} \delta_\theta \bar{Y}_s\right\}\right] \leq 4^{\frac{1}{1-4\left(\gamma_1+\gamma_2+2 \beta \|A_{ih}-A_{(i-1)h}\|_\infty\right)}} \mathbb{E}\left[\exp \left\{\frac{4 \nu p \gamma}{\nu-1}(|\xi|+\widetilde{\chi})\right\}\right]^{\frac{\nu-1}{\nu\left(1-4\left(\gamma_1+\gamma_2+2 \beta \|A_{ih}-A_{(i-1)h}\|_\infty\right)\right.}}<\infty.
$$
Note that $Y^1-Y^2=(1-\theta)\left(\delta_\theta Y+Y^1\right)$. It follows that
$$
\mathbb{E}\left[\sup _{t \in[T-h, T]}\left|Y_t^1-Y_t^2\right|\right] \leq(1-\theta)\left(\frac{1}{\gamma} \mathbb{E}\left[4^{\frac{\nu}{\nu-1}} \exp \left\{\frac{4 \nu \gamma}{\nu-1}(|\xi|+\widetilde{\chi})\right\}\right]^{\frac{\nu-1}{\nu\left(1-4\left(\gamma_1+\gamma_2+2 \beta \|A_{ih}-A_{(i-1)h}\|_\infty\right)\right)}}+\mathbb{E}\left[\sup _{t \in[0, T]}\left|Y_t^1\right|\right]\right).
$$
Letting $\theta \rightarrow 1$ yields that $Y^1=Y^2$ and then $\left(U^1, K^1\right)=\left(U^2, K^2\right)$ on $[T-h, T]$. Repeating iteratively this procedure a finite number of times, we get the uniqueness on the given interval $[0, T]$. The proof is complete.
\end{proof}

\begin{remark}
In the concave case, we can use $\theta \ell^1- \ell^2$ and $\theta \ell^2-\ell^1$ instead of $ \ell^1-\theta\ell^2$ and $\ell^2-\theta\ell^1$ in the definition of $\delta_\theta \ell$ and $\delta_\theta \widetilde{\ell}$. The proof holds from line to line.
\end{remark}

Let us now turn to the proof of existence.
\begin{lemma}
Assume that all the conditions of Theorem \ref{ubdd thm} hold and $P \in \mathcal{E}$ with $P_T=\xi$. Then, the following quadratic reflected BSDE :
$$
\left\{\begin{array}{l}
Y_t=\xi+\int_t^T f\left(s, Y_s, U_s, \mathbb{P}_{P_s}\right) d A_s-\int_t^T \int_E U_s(e) q(ds, de) +K_T-K_t, \quad 0 \leq t \leq T \\
Y_t \geq h\left(t, P_t, \mathbb{P}_{P_t}\right), \quad \forall t \in[0, T] \text { and } \quad \int_0^T\left(Y_{t^{-}}-h\left(t, P_{t^-}, \mathbb{P}_{P_{t^-}}\right)\right) d K_t=0.
\end{array}\right.
$$
admits a unique solution $(Y, U, K) \in \mathcal{E}\times {H}_{\nu}^{2,p} \times \mathbb{K}^p$.
\end{lemma}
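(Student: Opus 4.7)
The plan is to reduce this problem to the standard (non mean-field) quadratic reflected BSDE framework recalled in Remark \ref{rmk solutions}. Since $P$ is given and fixed, I would freeze the measure arguments by defining
\begin{equation*}
\widetilde{f}(\omega,t,y,u) := f(\omega,t,y,u,\mathbb{P}_{P_t}), \qquad L_t := h(t,P_t,\mathbb{P}_{P_t}).
\end{equation*}
The equation in the statement then coincides with the standard quadratic reflected BSDE (\ref{reflected BSDE normal}) with generator $\widetilde{f}$, obstacle $L$ and terminal $\xi$, so it suffices to check that the data $(\widetilde{f},L,\xi)$ satisfy assumption (H3'') and then invoke Theorem~2.1 of \cite{2023arXiv231020361L} as recorded in Remark \ref{rmk solutions}.

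Next I would verify (H3''). Measurability, continuity in $(y,u)$, Lipschitz continuity in $y$ and concavity/convexity in $u$ transfer directly from (H3)(a)-(b)-(e), since $\mathbb{P}_{P_t}$ enters $\widetilde f$ as an exogenous input. The quadratic-exponential growth follows from (H3)(c) with the enlarged coefficient
\begin{equation*}
\widetilde{\alpha}_t := \alpha_t + \beta\,\mathcal{W}_1(\mathbb{P}_{P_t},\delta_0) = \alpha_t + \beta\,\mathbb{E}[|P_t|].
\end{equation*}
The Lipschitz bound (H4)(b) gives $|L_t|\le|h(t,0,\delta_0)|+\gamma_1|P_t|+\gamma_2\,\mathbb{E}[|P_t|]$, and combining (H4)(a) with the c\`adl\`ag property of $P\in\mathcal{E}$ and the Lipschitz continuity of $h$ in $(y,\mu)$ shows that $L$ is c\`adl\`ag; once the exponential moments are controlled, $L\in\mathcal{E}$.

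The main obstacle is verifying the integrability requirement (H3'')(d) for $(\widetilde{\alpha},L_*,\xi)$, namely
\begin{equation*}
\mathbb{E}\Bigl[\exp\bigl\{p\lambda e^{\beta A_T}(|\xi|\vee L_*)+p\lambda\!\int_0^T\!e^{\beta A_s}\widetilde{\alpha}_s\,dA_s\bigr\}+\int_0^T\widetilde{\alpha}_s^2\,dA_s\Bigr]<\infty \quad \text{for every } p>0.
\end{equation*}
The contributions of $\alpha_t$, $h(t,0,\delta_0)$ and $\xi$ are covered by (H3)(d) and (H4). The new terms involving $P$ are controlled using $\|A_T\|_\infty<\infty$ from (H1), Jensen's inequality applied to the deterministic quantity $\mathbb{E}[|P_t|]$ (which pulls the expectation outside the exponential), and the standing hypothesis $P\in\mathcal{E}$, which supplies $e^{q\sup_{t\le T}|P_t|}\in L^1$ for every $q\ge 1$. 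A H\"older split separates the stochastic factor $|P_t|$ from the deterministic mean-field shift. Once this is in hand, Remark \ref{rmk solutions} applies directly and produces the unique solution $(Y,U,K)\in\mathcal{E}\times H_\nu^{2,p}\times\mathbb{K}^p$; in particular, both existence and uniqueness follow from the same non-mean-field result, so no additional fixed-point argument is required at this stage.
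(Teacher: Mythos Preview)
Your proposal is correct and follows essentially the same approach as the paper: freeze the measure argument to obtain a standard (non mean-field) quadratic reflected BSDE with modified data $(\widetilde{f},L,\xi)$, verify that the growth bounds of (H3'') hold with the enlarged coefficient $\widetilde{\alpha}_t=\alpha_t+\beta\,\mathbb{E}[|P_t|]$ and the obstacle bound $|L_t|\le|h(t,0,\delta_0)|+\gamma_1|P_t|+\gamma_2\,\mathbb{E}[|P_t|]$, and then invoke the existence, uniqueness and comparison results of \cite{2023arXiv231020361L} recorded in Remark~\ref{rmk solutions}. The paper's proof is terser---it states the two key inequalities and immediately appeals to the cited theorems---whereas you spell out the verification of the integrability condition (H3'')(d) via Jensen and H\"older; this extra detail is welcome but does not constitute a different route.
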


\begin{proof}
From Assumptions (H3) and (H4), we have
\begin{equation}
\label{fixed f and h}
\left|f\left(t, y, u, \mathbb{P}_{P_t}\right)\right| \leq \alpha_t+\beta\left(y+\mathbb{E}\left[\left|P_t\right|\right]\right)+\frac{1}{\lambda}j_\lambda(t, u), \quad\left|h\left(t, P_t, \mathbb{P}_{P_t}\right)\right| \leq\left|h\left(t, 0, \delta_0\right)\right|+\gamma_1\left|P_t\right|+\gamma_2 \mathbb{E}\left[\left|P_t\right|\right] .
\end{equation}

It follows from theorem 3.1 in \cite{2023arXiv231020361L} that the driver $(t, y, z) \mapsto f\left(t, y, u, \mathbb{P}_{P_t}\right)$ satisfies the comparison principle and  by applying theorem 2.1 in \cite{2023arXiv231020361L}, we get the desired result.
\end{proof}

Define recursively a sequence of stochastic processes $\left(Y^{(m)}\right)_{m=1}^{\infty}$ through the following quadratic reflected BSDE:
$$
\left\{\begin{array}{l}
Y_t^{(m)}=\xi+\int_t^T f\left(s, Y_s^{(m)}, U_s^{(m)}, \mathbb{P}_{Y_s^{(m-1)}} \right) d A_s-\int_t^T \int_E U_s(e)^{(m)} q(ds, de)+K_T^{(m)}-K_t^{(m)}, \quad 0 \leq t \leq T, \\
Y_t^{(m)} \geq h\left(t, Y_t^{(m-1)}, \mathbb{P}_{Y_t^{(m-1)}}\right), \quad \forall t \in[0, T] \text { and } \int_0^T\left(Y_t^{(m)}-h\left(t, Y_t^{(m-1)}, \mathbb{P}_{Y_t^{(m-1)}}\right)\right) d K_t^{(m)}=0,
\end{array}\right.
$$
where $Y_t^{(0)}=\mathbb{E}_t[\xi]$ for $t \in[0, T]$. In particular, we have that $\left(Y^{(m)}, Z^{(m)}, K^{(m)}\right) \in \mathcal E \times {H}_{\nu}^{2,p} \times \mathbb{K}^p$. 

\begin{remark}
As mentioned in \cite{hu2022quadratic} that if $\mathbb{E}\left[ \exp\{p|\xi|\} \right] < \infty$ for any $p\geq 1$, then the process $\left(\mathbb{E}_t[\xi]\right)_{0 \leq t \leq T} \in \mathcal{E}$. Thus we can recursively define the above quadratic reflected BSDE.
\end{remark}

Next, we use a $\theta$-method to prove that the limit of $Y^{(m)}$ is a desired solution. The following uniform estimates are crucial for our main result.

\begin{lemma}
\label{estimate 1}
Assume that the conditions of theorem 4.1 are fulfilled. Then, for any $p \geq 1$, we have
$$
\sup _{m \geq 0} \mathbb{E}\left[\exp \left\{p \gamma \sup _{s \in[0, T]}\left|Y_s^{(m)}\right|\right\}+\left(\int_0^T\int_E\left|U_t(e)^{(m)}\right|^2\phi_t(de)dA_t\right)^{p/2}+\left|K_T^{(m)}\right|^p\right]<\infty .
$$
\end{lemma}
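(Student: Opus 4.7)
I will first establish the uniform exponential moment bound for $Y^{(m)}_*:=\sup_{s\in[0,T]}|Y^{(m)}_s|$ via the representation formula of Lemma \ref{representation} combined with the a priori exponential estimate of Lemma \ref{priori estimates}, organized as a recursion in $m$ that contracts under the hypothesis $4(\gamma_1+\gamma_2)<1$. Once $Y^{(m)}_*$ is controlled, the bounds on $U^{(m)}$ and $K^{(m)}$ follow from the RBSDE dynamics together with It\^o/BDG estimates, tracking the quadratic-exponential growth through the $j_\lambda$ term.

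\textbf{Step 1: representation and pointwise exponential bound.} By Lemma \ref{representation}, for each $m\ge 1$,
$$Y^{(m)}_t \;=\; \underset{\tau\in\mathcal{S}_{t,T}}{\mathrm{ess\,sup}}\; \mathcal{E}^{f^{(m)}}_{t,\tau}\bigl[\xi\mathbf{1}_{\{\tau=T\}}+h(\tau,Y^{(m-1)}_\tau,\mathbb{P}_{Y^{(m-1)}_\tau})\mathbf{1}_{\{\tau<T\}}\bigr],$$
where $f^{(m)}(s,y,u):=f(s,y,u,\mathbb{P}_{Y^{(m-1)}_s})$ still satisfies the quadratic-exponential growth of (H3)(c) with $\alpha_s$ augmented by $\beta\,\mathbb{E}[|Y^{(m-1)}_s|]$. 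Applying Lemma \ref{priori estimates}(i) to each standard BSDE defining $\mathcal{E}^{f^{(m)}}_{t,\tau}$ and taking $\mathrm{ess\,sup}_\tau$, I obtain
$$\exp\bigl\{p\lambda|Y^{(m)}_t|\bigr\}\le \mathbb{E}_t\Bigl[\exp\bigl\{p\lambda e^{\beta A_T}(|\xi|\vee h^{(m)}_*)+p\lambda\int_t^T e^{\beta A_s}\bigl(\alpha_s+\beta\mathbb{E}[|Y^{(m-1)}_s|]\bigr)dA_s\bigr\}\Bigr],$$
where, by (H4)(b), $h^{(m)}_*\le H_*+\gamma_1 Y^{(m-1)}_*+\gamma_2\mathbb{E}[Y^{(m-1)}_*]$ with $H_*:=\sup_t|h(t,0,\delta_0)|$, and (H3)(d) guarantees that $|\xi|+H_*+\int_0^T\alpha_s dA_s$ has exponential moments of all orders.

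\textbf{Step 2: inductive uniform bound on $\mathbb{E}[e^{qY^{(m)}_*}]$.} Taking $\sup_t$, invoking Doob's maximal inequality, and writing $a_m:=\mathbb{E}[e^{qY^{(m)}_*}]$ for a suitably chosen $q>0$, Step 1 yields
$$a_m\le C_0\,\mathbb{E}\Bigl[\exp\bigl\{q_1\bigl(|\xi|+H_*+\textstyle\int_0^T\alpha_s dA_s\bigr)+q_2 Y^{(m-1)}_*+q_3\mathbb{E}[Y^{(m-1)}_*]\bigr\}\Bigr],$$
with $q_2\sim 4\gamma_1 q$ and $q_3\sim 4(\gamma_2+\beta A_T)q$ after accounting for the weights $e^{\beta A_T}$ and Doob's constant. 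Since $\mathbb{E}[Y^{(m-1)}_*]$ is deterministic, Jensen gives $e^{q_3\mathbb{E}[Y^{(m-1)}_*]}\le \mathbb{E}[e^{q_3 Y^{(m-1)}_*}]$, and a H\"older split with conjugate pair $(\nu,\nu/(\nu-1))$, $\nu>1$ close to $1$, separates the data factor from the $Y^{(m-1)}_*$-factor. This produces a recursion of the form
$$a_m\le K\cdot a_{m-1}^{\kappa},\qquad \kappa\approx 4(\gamma_1+\gamma_2).$$
Because $4(\gamma_1+\gamma_2)<1$, one can choose $\nu$ and $q$ and partition $[0,T]=\bigcup_{i=1}^N[(i-1)h,ih]$ with $\|A_{ih}-A_{(i-1)h}\|_\infty$ small enough to absorb the residual $\beta$-contributions, making $\kappa<1$ on each subinterval (this is the same partition as in the uniqueness proof). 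A straightforward induction on $m$ then gives $\sup_m a_m<\infty$ on $[T-h,T]$, and stitching the bounds across the finite partition yields $\sup_m\mathbb{E}[\exp\{p\gamma Y^{(m)}_*\}]<\infty$ for every $p\ge 1$.

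\textbf{Step 3: $U^{(m)}$ and $K^{(m)}$ bounds, and the main obstacle.} With $Y^{(m)}_*$ uniformly in $\mathcal{E}$, I apply It\^o's formula to $|Y^{(m)}|^2$ on $[0,T]$. The flat-off condition cancels the obstacle cross-term, and Young's inequality together with the bound $|f^{(m)}(s,y,u)|\le \alpha_s+\beta\mathbb{E}[|Y^{(m-1)}|]+\beta|y|+\tfrac1\lambda j_\lambda(s,u)$ gives
$$\mathbb{E}\Bigl[\Bigl(\int_0^T\!\!\int_E |U^{(m)}_s(e)|^2\phi_s(de)dA_s\Bigr)^{p/2}\Bigr]\le C_p\,\mathbb{E}\Bigl[|\xi|^p+(Y^{(m)}_*)^p+\bigl(\textstyle\int_0^T\alpha_s dA_s\bigr)^p+\varepsilon|K^{(m)}_T|^p\Bigr],$$
after controlling the quadratic $j_\lambda$ contribution through the exponential moment from Step 2. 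Isolating $K^{(m)}_T=Y^{(m)}_0-\xi-\int_0^T f^{(m)}(s,Y^{(m)}_s,U^{(m)}_s)dA_s+\int_0^T\!\!\int_E U^{(m)}_s(e)q(ds,de)$, taking $L^p$ norms, and using BDG on the martingale part closes the circular dependence between $U^{(m)}$ and $K^{(m)}$ via a standard small-$\varepsilon$ absorption. The main difficulty lies in Step 2: the scalar recursion $a_m\le Ka_{m-1}^\kappa$ must be engineered so that the two contributions from $Y^{(m-1)}$ (pointwise $\gamma_1$ and distributional $\gamma_2$, plus quadratic-driver $\beta$ contributions) combine to give total exponent strictly below $1$. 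This forces the subintervals of the time partition to be short enough to absorb the $\beta$-terms, and the condition $4(\gamma_1+\gamma_2)<1$ is precisely what makes such a partition possible.
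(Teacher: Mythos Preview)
Your proposal is correct and follows essentially the same route as the paper: the representation formula (Lemma~\ref{representation}) combined with the a priori exponential bound (Lemma~\ref{priori estimates}(i)), then Doob, Jensen and H\"older to produce a recursion $a_m\le K\,a_{m-1}^{\kappa}$ with $\kappa<1$ on short subintervals, iterated and stitched across a finite partition of $[0,T]$; the $U^{(m)},K^{(m)}$ bounds are then obtained from the RBSDE dynamics exactly as you sketch (the paper defers this last step to an external reference). One small point: the Doob constant $4$ is a multiplicative front factor on the expectation rather than a contribution to the recursion exponent, so in the paper the actual exponent is $e^{\beta A_T}(\gamma_1+\gamma_2+\beta h)$ rather than $4(\gamma_1+\gamma_2)$---but the hypothesis $4(\gamma_1+\gamma_2)<1$ is of course still sufficient, and your argument goes through unchanged.
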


\begin{proof}
The proof will be given in Appendix.
\end{proof}

\begin{lemma}
\label{estimate 2}
Assume that all the conditions of Theorem \ref{ubdd thm} hold. Then, for any $p \geq 1$, we have
$$
\Pi(p):=\sup _{\theta \in(0,1)} \lim _{m \rightarrow \infty} \sup _{q \geq 1} \mathbb{E}\left[\exp \left\{p \gamma \sup _{s \in[0, T]} \delta_\theta \bar{Y}_s^{(m, q)}\right\}\right]<\infty,
$$
where we use the following notations
$$
\delta_\theta Y^{(m, q)}=\frac{ Y^{(m+q)}-\theta Y^{(m)}}{1-\theta}, \delta_\theta \widetilde{Y}^{(m, q)}=\frac{ Y^{(m)}-\theta Y^{(m+q)}}{1-\theta} \text { and } \delta_\theta \bar{Y}:=\left|\delta_\theta Y^{(m, q)}\right|+\left|\delta_\theta \widetilde{Y}^{(m, q)}\right| .
$$
\end{lemma}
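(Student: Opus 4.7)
The plan is to mirror the structure of the uniqueness proof, but carefully keep track of the Picard indices in both the driver and the obstacle. Since $f(t,y,\mu,\cdot)$ is assumed concave (the convex case is symmetric via swapping the roles of $Y^{(m+q)}$ and $Y^{(m)}$), the $\theta$-linearization should produce a tractable BSDE for $\delta_\theta y^{(m,q),\tau}$ at each stopping time $\tau$, after which the a priori estimate of Lemma~\ref{priori estimates}(ii) gives an exponential bound; the condition $4(\gamma_1+\gamma_2)<1$ together with the almost-sure continuity of $A$ should allow us to close the estimate on a small time interval and then iterate.

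First, I would invoke the representation formula of Lemma~\ref{representation} on each level $m$, so that $Y^{(m)}_t=\operatorname*{ess\,sup}_{\tau\in\mathcal S_{t,T}} y^{(m),\tau}_t$, where $y^{(m),\tau}$ solves the (unreflected) quadratic BSDE with driver $g^{(m)}(s,y,u):=f(s,y,u,\mathbb P_{Y^{(m-1)}_s})$ and terminal data $\xi\mathbf 1_{\{\tau=T\}}+h(\tau,Y^{(m-1)}_\tau,\mathbb P_{Y^{(m-1)}_s}|_{s=\tau})\mathbf 1_{\{\tau<T\}}$. Setting $\delta_\theta y^{(m,q),\tau}=(y^{(m+q),\tau}-\theta y^{(m),\tau})/(1-\theta)$ and $\delta_\theta u^{(m,q),\tau}$ analogously, concavity of $f$ in $u$ yields, exactly as in equation~\eqref{delta BSDE}, a BSDE for $(\delta_\theta y^{(m,q),\tau},\delta_\theta u^{(m,q),\tau})$ whose driver still satisfies a quadratic-exponential upper bound of the form $\alpha_s+\beta(|Y^{(m)}_s|+\mathbb E[|Y^{(m-1)}_s|])+\beta|\delta_\theta y^{(m,q),\tau}_s|+\tfrac1\lambda j_\lambda(s,\delta_\theta u^{(m,q),\tau}_s)$, and whose terminal condition is controlled by $|\xi|+|h(\tau,0,\delta_0)|+\gamma_1(2|Y^{(m-1)}_\tau|+|\delta_\theta Y^{(m-1,q)}_\tau|)+\gamma_2(2\mathbb E[|Y^{(m-1)}_s|]+\mathbb E[|\delta_\theta Y^{(m-1,q)}_s|])|_{s=\tau}$.

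Next, I would apply Lemma~\ref{priori estimates}(ii) to get the pointwise exponential estimate on $(\delta_\theta y^{(m,q),\tau}_t)^+$, and then pass to the essential supremum over $\tau$ to obtain a bound on $\exp\{p\gamma(\delta_\theta Y^{(m,q)}_t)^+\}$ in terms of a conditional expectation involving $|\xi|$, the auxiliary variable $\widetilde\chi^{(m,q)}$ (analogous to the $\widetilde\chi$ of the uniqueness lemma but formed from $\sup_s|Y^{(m-1)}_s|$, $\sup_s|Y^{(m+q-1)}_s|$, and Lemma~\ref{estimate 1}), and $\gamma_1\sup_{s\in[t,T]}|\delta_\theta Y^{(m-1,q)}_s|+\gamma_2\sup_{s\in[t,T]}\mathbb E[|\delta_\theta Y^{(m-1,q)}_s|]$. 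Running the same argument with the roles of $Y^{(m+q)}$ and $Y^{(m)}$ exchanged produces the corresponding bound on $(\delta_\theta\widetilde Y^{(m,q)}_t)^+$. The inequalities $(\delta_\theta Y)^-\leq(\delta_\theta\widetilde Y)^++2|Y^{(m)}|$ and $(\delta_\theta\widetilde Y)^-\leq(\delta_\theta Y)^++2|Y^{(m+q)}|$ then upgrade these one-sided bounds into a two-sided bound on $\delta_\theta\bar Y^{(m,q)}_s$.

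To close the estimate, I would apply Doob's maximal inequality and then Hölder's inequality with a parameter $\nu>1$ chosen so that $4\nu(\gamma_1+\beta\|A_{ih}-A_{(i-1)h}\|_\infty)<1$, together with the Jensen inequality controlling $\mathbb E[\delta_\theta\bar Y_s^{(m-1,q)}]$ by $\delta_\theta\bar Y_s^{(m-1,q)}$ in expectation. Since $A$ is continuous with $\|A_T\|_\infty<\infty$ (Assumption (H1)), I can partition $[0,T]$ into finitely many sub-intervals $[(i-1)h,ih]$ such that the $\theta$-independent contraction coefficient $4(\gamma_1+\gamma_2+2\beta\|A_{ih}-A_{(i-1)h}\|_\infty)$ is strictly less than $1$, using the hypothesis \eqref{quadratic condition}. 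On $[T-h,T]$ this yields
\[
\mathbb E\!\left[\exp\!\left\{p\gamma\sup_{s\in[T-h,T]}\delta_\theta\bar Y^{(m,q)}_s\right\}\right]\leq C_1\,\mathbb E\!\left[\exp\!\left\{C_2(|\xi|+\widetilde\chi^{(m,q)})\right\}\right]^{\kappa}\,\Psi_{m-1}^{\kappa'},
\]
where $\Psi_{m-1}$ involves the same quantity at index $m-1$ raised to a power $<1$; iterating in $m$ and using Lemma~\ref{estimate 1} to control $\widetilde\chi^{(m,q)}$ uniformly in $m,q$, I obtain $\sup_{q\geq 1}\mathbb E[\exp\{p\gamma\sup_{[T-h,T]}\delta_\theta\bar Y^{(m,q)}\}]$ bounded for all large $m$, and the $\limsup$ as $m\to\infty$ is finite. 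Repeating this procedure on $[T-2h,T-h]$ with terminal data $\delta_\theta\bar Y^{(m,q)}_{T-h}$ in place of $|\xi|$, and so on, extends the bound to $[0,T]$.

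The main obstacle will be the index shift: the obstacle and the mean-field argument of the driver at level $m$ involve $Y^{(m-1)}$, so the bound for $\delta_\theta\bar Y^{(m,q)}$ naturally couples with $\delta_\theta\bar Y^{(m-1,q)}$. Handling this requires that the Hölder exponent on the $\delta_\theta\bar Y^{(m-1,q)}$-factor be strictly less than one, which is exactly what the strict inequality $4(\gamma_1+\gamma_2)<1$ buys us; the price is that we can only conclude a uniform bound after taking $\limsup_{m\to\infty}$, which is precisely the form of the statement $\Pi(p)<\infty$.
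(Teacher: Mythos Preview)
Your approach follows the paper's closely, but there is a genuine gap in the driver estimate. In the Picard scheme the representation BSDE for $y^{(m),\tau}$ carries $y^{(m),\tau}_s$ itself in the $y$-slot of $f$, not $Y^{(m)}_s$. Consequently, after the $\theta$-linearization and the use of concavity/convexity in $u$ together with Lipschitzness in $y$, the correct upper bound for $\delta_\theta f^{(m,q)}(s,y,u)$ is
\[
\alpha_s + 2\beta\bigl|y^{(m),\tau}_s\bigr| + \beta\,\mathbb{E}\bigl[|Y^{(m-1)}_s|\bigr] + \beta|y| + \tfrac{1}{\lambda}j_\lambda(s,u),
\]
and the term $|y^{(m),\tau}_s|$ cannot be replaced by $|Y^{(m)}_s|$: one only has $y^{(m),\tau}_s\le Y^{(m)}_s$ pointwise, with no control on the modulus. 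Since you later take the essential supremum over $\tau$, this stray $|y^{(m),\tau}_s|$ must be handled \emph{uniformly in~$\tau$ inside the exponential}, which your $\widetilde\chi^{(m,q)}$ (built only from $\sup_s|Y^{(m-1)}_s|$, $\sup_s|Y^{(m+q-1)}_s|$ and Lemma~\ref{estimate 1}) does not do.

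The paper closes this gap as follows: after applying Lemma~\ref{priori estimates}(ii) one obtains a factor $\exp\{2p\lambda e^{\beta A_T}\beta A_T\sup_{s\in[t,T]}|y^{(m),\tau}_s|\}$ inside the conditional expectation; a H\"older split isolates this factor, and then the a~priori estimate \eqref{eq estimate y} on $y^{(m),\tau}$ (derived in the proof of Lemma~\ref{estimate 1}) together with Doob's inequality bounds $\mathbb{E}_t[\exp\{p\lambda\sup_{s}|y^{(m),\tau}_s|\}]$ by $4\,\mathbb{E}_t[\exp\{p\lambda\zeta^{(m,q)}\}]$, uniformly in $\tau$, where $\zeta^{(m,q)}$ depends only on $|\xi|$, $\eta$, $\sup_s|Y^{(m-1)}_s|$, $\sup_s|Y^{(m+q-1)}_s|$ and $C_2$. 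This extra multiplicative factor $\mathbb{E}_t[\exp\{4p\lambda e^{2\beta A_T}\zeta^{(m,q)}\}]$ is what allows passage to the $\operatorname{ess\,sup}$ over $\tau$ and is controlled, uniformly in $m,q$, by Lemma~\ref{estimate 1}. You should insert this step between your application of Lemma~\ref{priori estimates}(ii) and the Doob/H\"older argument; without it the bound you write for $\exp\{p\gamma(\delta_\theta Y^{(m,q)}_t)^+\}$ is not justified. A secondary point: your driver bound also omits the contribution $\beta\,\mathbb{E}[|\delta_\theta Y^{(m-1,q)}_t|]$ coming from the measure argument (the $\delta_\theta f_0$ term), which is why the final recursion carries a coefficient $\gamma_2+\beta(A_T-A_t)$ rather than just $\gamma_2$ on $\sup_s\mathbb{E}[|\delta_\theta Y^{(m-1,q)}_s|]$.
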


\begin{proof}
The proof will be given in Appendix.
\end{proof}
We are now in a position to complete the proof of the main result.

\begin{proof}[Proof of Theorem \ref{ubdd thm}]
It is enough to prove the existence. Note that for any integer $p \geq 1$ and $\theta \in(0,1)$,
$$
\limsup _{m \rightarrow \infty} \sup _{q \geq 1} \mathbb{E}\left[\sup _{t \in[0, T]}\left|Y_t^{(m+q)}-Y_t^{(m)}\right|^p\right] \leq 2^{p-1}(1-\theta)^p\left(\frac{\Pi(1) p !}{\gamma^p}+\sup _{m \geq 1} \mathbb{E}\left[\sup _{t \in[0, T]}\left|Y_t^{(m)}\right|^p\right]\right) .
$$
Sending $\theta \rightarrow 1$ and recalling Lemmas \ref{estimate 1} and \ref{estimate 2} , we could find a process $Y \in \mathcal E$ such that
\begin{equation}
\label{eq limit}
\lim _{m \rightarrow \infty} \mathbb{E}\left[\sup _{t \in[0, T]}\left|Y_t^{(m)}-Y_t\right|^p\right]=0, \quad \forall p \geq 1 .
\end{equation}
Applying Itô's formula to $\left|Y_t^{(m+q)}-Y_t^{(m)}\right|^2$ and by a standard calculus, we have
$$
\begin{aligned}
& \mathbb{E}\left[\int_0^T\left|U_t(e)^{(m+q)}-U_t(e)^{(m)}\right|^2 \phi_s(de)d A_t\right] \leq 2\mathbb{E}\left[\sup _{t \in[0, T]}\left|Y_t^{(m+q)}-Y_t^{(m)}\right|^2+\sup _{t \in[0, T]}\left|Y_t^{(m+q)}-Y_t^{(m)}\right| \Delta^{(m, q)}\right] \\
& \leq 2\mathbb{E}\left[\sup _{t \in[0, T]}\left|Y_t^{(m+q)}-Y_t^{(m)}\right|^2\right]+\mathbb{E}\left[\left|\Delta^{(m, q)}\right|^2\right]^{\frac{1}{2}} \mathbb{E}\left[\sup _{t \in[0, T]}\left|Y_t^{(m+q)}-Y_t^{(m)}\right|^2\right]^{\frac{1}{2}},
\end{aligned}
$$
with
$$
\Delta^{(m, q)}:=\int_0^T\left|f\left(t, Y_t^{(m+q)}, U_t^{(m+q)}, \mathbb{P}_{Y_t^{(m+q-1)}}\right)-f\left(t, Y_t^{(m)}, U_t^{(m)}, \mathbb{P}_{Y_t^{(m-1)}}\right)\right| d A_t+\left|K_T^{(m+q)}\right|+\left|K_T^{(m)}\right|,
$$
which together with Lemma \ref{estimate 1}, (\ref{eq limit}) and dominated convergence theorem indicates that there exists a process $U \in {H}_{\nu}^{2,p}$ so that
$$
\lim _{m \rightarrow \infty} \mathbb{E}\left[\left(\int_0^T\left|U_t^{(m)}-U_t\right|^2 \phi_s(de)d A_t\right)^p\right]=0,\quad \forall p \geq 1
$$
Set
$$
K_t=Y_t-Y_0+\int_0^t f\left(s, Y_s, U_s, \mathbb{P}_{Y_s} \right) d A_s-\int_0^t\int_E U_s(e) q(ds,de).
$$
Applying dominated convergence theorem again yields that for each $p \geq 1$
$$
\lim _{m \rightarrow \infty} \mathbb{E}\left[\left(\int_0^T\left|f\left(t, Y_t^{(m)}, U_t^{(m)}, \mathbb{P}_{Y_t^{(m-1)}}\right)-f\left(t, Y_t, U_t, \mathbb{P}_{Y_t}\right)\right| d A_t\right)^p\right]=0,
$$
which implies that $\mathbb{E}\left[\sup _{t \in[0, T]}\left|K_t-K_t^{(m)}\right|^p\right] \rightarrow 0$ as $m \rightarrow \infty$ for each $p \geq 1$ and that $K$ is a nondecreasing process. Note that
$$
\lim _{m \rightarrow \infty} \mathbb{E}\left[\sup _{t \in[0, T]}\left|h\left(t, Y_t^{(m-1)}, \mathbb{P}_{Y_t^{(m-1)}}\right)-h\left(t, Y_t, \mathbb{P}_{Y_t}\right)\right|\right] \leq\left(\gamma_1+\gamma_2\right) \lim _{m \rightarrow \infty} \mathbb{E}\left[\sup _{t \in[0, T]}\left|Y_t^{(m-1)}-Y_t\right|\right]=0.
$$
Then it is obvious that $Y_t \geq h\left(t, Y_t, \mathbb{P}_{Y_t}\right)$. Moreover, recalling lemma 13 in \cite{briand2018bsdes}, we have
$$
\int_0^T\left(Y_t-h\left(t, Y_t, \mathbb{P}_{Y_t}\right)\right) d K_t=\lim _{m \rightarrow \infty} \int_0^T\left(Y_t^{(m)}-h\left(t, Y_t^{(m-1)}, \mathbb{P}_{\left.Y_t^{(m-1)}\right)}\right) d K_t^{(m)}=0\right.,
$$
which implies that $(Y, Z, K) \in \mathcal{E}\times {H}_{\nu}^{2,p} \times \mathbb{K}^p$ is a solution to quadratic mean-field reflected (\ref{reflected BSDE}). The proof is complete.  
\end{proof}

\begin{appendices}
\section{Proofs of Lemma \ref{estimate 1} and Lemma \ref{estimate 2}}
\label{appexdix A}
We now turn to the proofs of Lemma \ref{estimate 1} and Lemma \ref{estimate 2}.
\begin{proof}[Proof of Theorem \ref{estimate 1}]
It follows from Lemma \ref{representation} and Remark \ref{rmk solutions} that for any $m \geq 1$
\begin{equation}
\label{eq representation}
Y_t^{(m)}:=\underset{\tau \in \mathcal{T}_t}{\operatorname{ess} \sup } 
\ y_t^{(m), \tau}, \quad \forall t \in[0, T],
\end{equation}
in which $y_t^{(m), \tau}$ is the solution of the following quadratic BSDE
$$
y_t^{(m), \tau}=\xi \mathbf{1}_{\{\tau=T\}}+h\left(\tau, Y_\tau^{(m-1)},\left(\mathbb{P}_{Y_{s}^{(m-1)}}\right)_{s=\tau}\right) \mathbf{1}_{\{\tau<T\}}+\int_t^\tau f\left(s, y_s^{(m), \tau}, u_s^{(m), \tau}, \mathbb{P}_{Y_s^{(m-1)}} \right) d A_s-\int_t^\tau \int_E u_s(e)^{(m), \tau} q(ds, de).
$$
Thanks to assertion (i) of Lemma \ref{priori estimates} (taking $\alpha_t=\alpha_t+\beta \mathbb{E}\left[\left|Y_t^{(m-1)}\right|\right]$ ) and in view of (\ref{fixed f and h}), we get for any $t \in[0, T]$,
\begin{equation}
\label{eq estimate y}
\begin{aligned}
& \exp \left\{\lambda\left|y_t^{(m), \tau}\right|\right\} \\
& \leq \mathbb{E}_t\left[\exp \left\{\lambda e^{\beta A_T}\left(|\xi|+\eta+\gamma_1 \sup _{s \in[t, T]}\left|Y_s^{(m-1)}\right|+\left(\gamma_2+\beta(A_T-A_t)\right) \sup _{s \in[t, T]} \mathbb{E}\left[\left|Y_s^{(m-1)}\right|\right]\right)\right\}\right],
\end{aligned}
\end{equation}
in which $\eta=\int_0^T \alpha_s d A_s+\sup _{s \in[0, T]}\left|h\left(s, 0, \delta_0\right)\right|$. Recalling (\ref{eq representation}) and applying Doob's maximal inequality and Jensen's inequality, we get that for each $m \geq 1, p \geq 2$ and $t \in[0, T]$
$$
\begin{aligned}
\mathbb{E}\left[\exp \left\{p \lambda \sup _{s \in[t, T]}\left|Y_s^{(m)}\right|\right\}\right]  &\leq 4 \mathbb{E}\left[\exp \left\{p \lambda e^{\beta A_T}\left(|\xi|+\eta+\gamma_1 \sup _{s \in[t, T]}\left|Y_s^{(m-1)}\right|\right)\right\}\right] \\
& \times \mathbb{E}\left[\exp \left\{p \lambda e^{\beta A_T}\left(\gamma_2+\beta(A_T-A_t)\right) \sup _{s \in[t, T]}\left|Y_s^{(m-1)}\right|\right\}\right].
\end{aligned}
$$
Under assumption (\ref{quadratic condition}), we can then find three constants $h \in(0, T]$ and $\nu, \widetilde{\nu}>1$ depending only on $\beta, A_T, \gamma_1$ and $\gamma_2$ such that
$$
\max_{1\le i\le N}\{4 e^{\beta A_T} \widetilde{\nu}\left(\gamma_1+\gamma_2+\beta \|A_{ih}-A_{(i-1)h}\|_\infty\right) \}<1 \text { and } 4 e^{\beta A_T} \nu \widetilde{\nu} \gamma_1<1,
$$
where $T=Nh$.

In the spirit of Hölder's inequality, we derive that for any $p \geq 2$
$$
\begin{aligned}
& \mathbb{E}\left[\exp \left\{p \lambda \sup _{s \in[T-h, T]}\left|Y_s^{(m)}\right|\right\}\right] \\
& \leq 4 \mathbb{E}\left[\exp \left\{\frac{\nu p \lambda}{\nu-1} e^{\beta A_T}(|\xi|+\eta)\right\}\right]^{\frac{\nu-1}{\nu}} \mathbb{E}\left[\exp \left\{p \lambda \sup _{s \in[T-h, T]}\left|Y_s^{(m-1)}\right|\right\}\right]^{e^{\beta A_T}\left(\gamma_1+\gamma_2+\beta \|A_{ih}-A_{(i-1)h}\|_\infty\right)} \\
& \leq 4 \mathbb{E}\left[\exp \left\{\frac{2 \nu p \lambda}{\nu-1} e^{\beta A_T}|\xi|\right\}\right]^{\frac{\nu-1}{2 \nu}} \mathbb{E}\left[\exp \left\{\frac{2 \nu p \lambda}{\nu-1} e^{\beta A_T} \eta\right\}\right]^{\frac{\nu-1}{2 \nu}} \mathbb{E}\left[\exp \left\{p \lambda \sup _{s \in[T-h, T]}\left|Y_s^{(m-1)}\right|\right\}\right]^{e^{\beta A_T}\left(\gamma_1+\gamma_2+\beta \|A_{ih}-A_{(i-1)h}\|_\infty\right)} .
\end{aligned}
$$
Define $\rho=\frac{1}{1-e^{\beta A_T}\left(\gamma_1+\gamma_2+\beta \|A_{ih}-A_{(i-1)h}\|_\infty\right)}$.

If $N=1$, it follows from the previous inequality that for each $p \geq 2$ and $m \geq 1$
$$
\begin{aligned}
& \mathbb{E}\left[\exp \left\{p \lambda \sup _{s \in[0, T]}\left|Y_s^{(m)}\right|\right\}\right] \\
& \leq 4 \mathbb{E}\left[\exp \left\{\frac{2 \nu p \lambda}{\nu-1} e^{\beta A_T}|\xi|\right\}\right]^{\frac{1}{2}} \mathbb{E}\left[\exp \left\{\frac{2 \nu p \lambda}{\nu-1} e^{\beta A_T} \eta\right\}\right]^{\frac{1}{2}} \mathbb{E}\left[\exp \left\{p \lambda \sup _{s \in[0, T]}\left|Y_s^{(m-1)}\right|\right\}\right]^{e^{\beta A_T}\left(\gamma_1+\gamma_2+\beta \|A_{T}-A_{0}\|_\infty\right)} .
\end{aligned}
$$
Iterating the above procedure $m$ times, we get,
\begin{equation}
\label{eq iteration}
\begin{aligned}
& \mathbb{E}\left[\exp \left\{p \lambda \sup _{s \in[0, T]}\left|Y_s^{(m)}\right|\right\}\right] \\
& \leq 4^\rho \mathbb{E}\left[\exp \left\{\frac{2 \nu p \lambda}{\nu-1} e^{\beta A_T}|\xi|\right\}\right]^{\frac{\rho}{2}} \mathbb{E}\left[\exp \left\{\frac{2 \nu p \lambda}{\nu-1} e^{\beta A_T} \eta\right\}\right]^{\frac{\rho}{2}} \mathbb{E}\left[\exp \left\{p \lambda \sup _{s \in[0, T]}\left|Y_s^{(0)}\right|\right\}\right]^{e^{m \beta A_T}\left(\gamma_1+\gamma_2+\beta \|A_{T}-A_{0}\|_\infty\right)^m},
\end{aligned}  
\end{equation}
which is uniformly bounded with respect to $m$. If $N=2$, proceeding identically as in the above, we have for any $p \geq 2$,
\begin{equation}
\label{eq after iteration}
\begin{aligned}
& \mathbb{E}\left[\exp \left\{p \lambda \sup _{s \in[T-h, T]}\left|Y_s^{(m)}\right|\right\}\right] \\
& \leq 4^\rho \mathbb{E}\left[\exp \left\{\frac{2 \nu p \lambda}{\nu-1} e^{\beta A_T}|\xi|\right\}\right]^{\frac{\rho}{2}} \mathbb{E}\left[\exp \left\{\frac{2 \nu p \lambda}{\nu-1} e^{\beta A_T} \eta\right\}\right]^{\frac{\rho}{2}} \mathbb{E}\left[\exp \left\{p \lambda \sup _{s \in[0, T]}\left|Y_s^{(0)}\right|\right\}\right]^{e^{m \beta A_T}\left(\gamma_1+\gamma_2+\beta \|A_{ih}-A_{(i-1)h}\|_\infty\right)^m}.
\end{aligned}
\end{equation}
Then, consider the following quadratic reflected BSDEs on time interval $[0, T-h]$ :
$$
\left\{\begin{array}{l}
Y_t^{(m)}=Y_{T-h}^{(m)}+\int_t^{T-h} f\left(s, Y_s^{(m-1)}, U_s^{(m)}, \mathbb{P}_{Y_s^{(m-1)}}, \right) d A_s-\int_t^{T-h}\int_E U_s(e)^{(m)} q(ds, de)+K_T^{(m)}-K_t^{(m)}, \quad 0 \leq t \leq T-h, \\
Y_t^{(m)} \geq h\left(t, Y_t^{(m-1)}, \mathbb{P}_{Y_t^{(m-1)}}\right), \quad \forall t \in[0, T-h] \text { and } \int_0^{T-h}\left(Y_t^{(m)}-h\left(t, Y_t^{(m-1)}, \mathbb{P}_{\left.Y_t^{(m-1)}\right)}\right)\right) d K_t^{(m)}=0 .
\end{array}\right.
$$
In view of the derivation of (\ref{eq iteration}), we deduce that
$$
\begin{aligned}
& \mathbb{E}\left[\exp \left\{p \lambda \sup _{s \in[0, T-h]}\left|Y_s^{(m)}\right|\right\}\right] \\
& \leq 4^\rho \mathbb{E}\left[\exp \left\{\frac{2 \nu p \lambda}{\nu-1} e^{\beta A_T}\left|Y_{T-h}^{(m)}\right|\right\}\right]^{\frac{\rho}{2}} \mathbb{E}\left[\exp \left\{\frac{2 \nu p \lambda}{\nu-1} e^{\beta A_T} \eta\right\}\right]^{\frac{\rho}{2}} \mathbb{E}\left[\exp \left\{p \lambda \sup _{s \in[0, T]}\left|Y_s^{(0)}\right|\right\}\right]^{e^{m \beta A_T}\left(\gamma_1+\gamma_2+\beta \|A_{ih}-A_{(i-1)h}\|_\infty\right)^m} \\
& \leq 4^{\rho+\frac{\rho^2}{2}} \mathbb{E}\left[\exp \left\{\left(\frac{2 \nu e^{\beta A_T}}{\nu-1}\right)^2 p \lambda|\xi|\right\}\right]^{\frac{\rho^2}{4}} \mathbb{E}\left[\exp \left\{\left(\frac{2 \nu e^{\beta A_T}}{\nu-1}\right)^2 p \lambda \eta\right\}\right]^{\frac{\rho^2}{4}} \mathbb{E}\left[\exp \left\{\frac{2 \nu p \lambda}{\nu-1} e^{\beta A_T} \eta\right\}\right]^{\frac{\rho}{2}} \\
& \quad \times \mathbb{E}\left[\exp \left\{\frac{2 \nu p \lambda}{\nu-1} e^{\beta A_T}\left|Y_s^{(0)}\right|\right\}\right]^{\frac{\rho}{2} e^{m \beta A_T}\left(\gamma_1+\gamma_2+\beta \|A_{ih}-A_{(i-1)h}\|_\infty\right)^m} \mathbb{E}\left[\exp \left\{p \lambda \sup _{s \in[0, T]}\left|Y_s^{(0)}\right|\right\}\right]^{e^{m \beta A_T}\left(\gamma_1+\gamma_2+\beta \|A_{ih}-A_{(i-1)h}\|_\infty\right)^m},
\end{aligned}
$$
where we used (\ref{eq after iteration}) in the last inequality. Putting the above inequalities together and applying Hölder's inequality again yields that for any $p \geq 2$
\begin{equation}
\label{eq p leq 2}
\begin{aligned}
& \mathbb{E}\left[\exp \left\{p \lambda \sup _{s \in[0, T]}\left|Y_s^{(m)}\right|\right\}\right] \leq \mathbb{E}\left[\exp \left\{2 p \lambda \sup _{s \in[0, T-h]}\left|Y_s^{(m)}\right|\right\}\right]^{\frac{1}{2}} \mathbb{E}\left[\exp \left\{2 p \lambda \sup _{s \in[T-h, T]}\left|Y_s^{(m)}\right|\right\}\right]^{\frac{1}{2}} \\
& \leq 4^{\rho+\frac{\rho^2}{4}} \mathbb{E}\left[\exp \left\{\left(\frac{2 \nu e^{\beta A_T}}{\nu-1}\right)^2 2 p \lambda|\xi|\right\}\right]^{\frac{\rho}{4}+\frac{\rho^2}{8}} \mathbb{E}\left[\exp \left\{\left(\frac{2 \nu e^{\beta A_T}}{\nu-1}\right)^2 2 p \lambda \eta\right\}\right]^{\frac{\rho}{2}+\frac{\rho^2}{8}} \\
& \quad \times \mathbb{E}\left[\exp \left\{\frac{4 \nu}{\nu-1} p \lambda e^{\beta A_T} \sup _{s \in[0, T]}\left|Y_s^{(0)}\right|\right\}\right]^{\left(1+\frac{\rho}{4}\right) e^{m \beta A_T}\left(\gamma_1+\gamma_2+\beta \|A_{ih}-A_{(i-1)h}\|_\infty\right)^m},
\end{aligned}  
\end{equation}
which is uniformly bounded with respect to $m$. Iterating the above procedure $\mu$ times in the general case and recalling [Draft, Prop 4.6], we eventually get
$$
\sup _{m \geq 0} \mathbb{E}\left[\exp \left\{p \lambda \sup _{s \in[0, T]}\left|Y_s^{(m)}\right|\right\}+\left(\int_0^T\int_E\left|U_t(e)^{(m)}\right|^2\phi_t(de)dA_t\right)^{p/2}+\left|K_T^{(m)}\right|^p\right]<\infty, \quad \forall p \geq 1,
$$
which concludes the proof.
\end{proof}

\begin{proof}[Proof of Lemma \ref{estimate 2}]
Without loss of generality, assume $f(t, y, \mu, \cdot)$ is convex. For each fixed $m, q \geq 1$ and $\theta \in(0,1)$, we can define similarly $\delta_\theta \ell^{(m, q)}$ and $\delta_\theta \widetilde{\ell}^{(m, q)}$ for $y^\tau, u^\tau$. Then, the pair of processes $\left(\delta_\theta y^{(m, q), \tau}, \delta_\theta u^{(m, q), \tau}\right)$ satisfies the following BSDE:
\begin{equation}
\label{eq delta}
\delta_\theta y_t^{(m, q), \tau}=\delta_\theta \eta^{(m, q), \tau}+\int_t^\tau\left(\delta_\theta f^{(m, q)}\left(s, \delta_\theta y_s^{(m, q), \tau}, \delta_\theta z_s^{(m, q), \tau}\right)+\delta_\theta f_0^{(m, q)}(s)\right) d A_s-\int_t^\tau \int_E \delta_\theta u_s(e)^{(m, q), \tau} d q(ds, de),
\end{equation}
where the terminal condition and generator are given by
$$
\begin{aligned}
& \delta_\theta \eta^{(m, q)}=\xi \mathbf{1}_{\{\tau=T\}}+\frac{h\left(\tau, Y_\tau^{(m+q-1)},\left(\mathbb{P}_{Y_s^{(m+q-1)}}\right)_{s=\tau}\right)-\theta h\left(\tau, Y_\tau^{(m-1)},\left(\mathbb{P}_{Y_s^{(m-1)}}\right)_{s=\tau}\right)}{1-\theta} \mathbf{1}_{\{\tau<T\}}, \\
& \delta_\theta f_0^{(m, q)}(t)=\frac{1}{1-\theta}\left(f\left(t, y_t^{(m+q), \tau}, \mathbb{P}_{Y_t^{(m+q-1)}}, u_t^{(m+q), \tau}\right)-f\left(t, y_t^{(m+q), \tau}, \mathbb{P}_{Y_t^{(m-1)}}, u_t^{(m+q), \tau}\right)\right), \\
\end{aligned}
$$
$$
\begin{aligned}
\delta_\theta f^{(m, q)}(t, y, u)=\frac{1}{1-\theta}\left( f\left(t, (1-\theta)y+\theta y_t^{(m), \tau}, \mathbb{P}_{Y_t^{(m-1)}}, (1-\theta)u+\theta u_t^{(m), \tau}\right)\right.
\left.-\theta f\left(t, y_t^{(m), \tau}, \mathbb{P}_{Y_t^{(m-1)}},u_t^{(m), \tau}\right)\right). \\
\end{aligned}
$$
Recalling Assumptions (H3) and (H4), we have
$$
\begin{aligned}
& \delta_\theta \eta^{(m, q)} \leq|\xi|+\left|h\left(\tau, 0, \delta_0\right)\right|+\gamma_1(\left.2\left|Y_\tau^{(m-1)}\right|+\left|\delta_\theta Y_\tau^{(m-1, q)}\right|\right) \left.+\gamma_2\left(2 \mathbb{E}\left[\left|Y_s^{(m-1)}\right|\right]_{s=\tau}+\mathbb{E}\left[\left|\delta_\theta Y_s^{(m-1, q)}\right|\right]_{s=\tau}\right)\right), \\
& \delta_\theta f_0^{(m, q)}(t) \leq \beta\left(\mathbb{E}\left[\left|Y_t^{(m+q-1)}\right|\right]+\right.\left.\mathbb{E}\left[\left|\delta_\theta Y_t^{(m-1, q)}\right|\right]\right), \\
& \delta_\theta f^{(m, q)}(t, y, z) \leq \beta|y|+\beta\left|y_t^{(m), \tau}\right|+f\left(t, y_t^{(m), \tau}, \mathbb{P}_{Y_t^{(m-1)}},u\right) \\
& \leq \alpha_t +2 \beta\left|y_t^{(m), \tau}\right|+\beta \mathbb{E}\left[\left|Y_t^{(m-1)}\right|\right]+\beta|y|+\frac{1}{\lambda}j_\lambda(t, u).
\end{aligned}
$$
For any $m, q \geq 1$, set $C_2:=\sup _m \mathbb{E}\left[\sup _{s \in[0, T]}\left|Y_s^{(m)}\right|\right]<\infty$ and
$$
\begin{aligned}
& \zeta^{(m, q)}=e^{\beta A_T}\left(|\xi|+\eta+\gamma_1\left(\sup _{s \in[0, T]}\left|Y_s^{(m-1)}\right|+\sup _{s \in[0, T]}\left|Y_s^{(m+q-1)}\right|\right)+\left(\gamma_2+\beta A_T\right) C_2\right), \\
& \chi^{(m, q)}:=\eta+2 \gamma_1\left(\sup _{s \in[0, T]}\left|Y_s^{(m+q-1)}\right|+\sup _{s \in[0, T]}\left|Y_s^{(m-1)}\right|\right)+2\left(\gamma_2+\beta A_T\right) C_2, \\
& \tilde{\chi}^{(m, q)}:=\chi^{(m, q)}+\sup _{s \in[0, T]}\left|Y_s^{(m+q)}\right|+\sup _{s \in[0, T]}\left|Y_s^{(m)}\right| .
\end{aligned}
$$
Using assertion (ii) of Lemma \ref{priori estimates} to (\ref{eq delta}) and Hölder's inequality, we derive that for any $p \geq 1$
$$
\begin{aligned}
& \exp \left\{p \lambda\left(\delta_\theta y_t^{(m, q), \tau}\right)^{+}\right\} \\
& \leq \mathbb{E}_t\left[\operatorname { e x p } \left\{p \lambda e ^ { \beta A_T} \left(|\xi|+\chi^{(m, q)}+2 \beta A_T \sup _{s \in[t, T]}\left|y_s^{(m), \tau}\right|+\gamma_1 \sup _{s \in[t, T]}\left|\delta_\theta Y_s^{(m-1, q)}\right|\right.\right.\right. \\
&\quad \left.\left.\left.+\left(\gamma_2+\beta(A_T-A_t)\right) \sup _{s \in[t, T]} \mathbb{E}\left[\left|\delta_\theta Y_s^{(m-1, q)}\right|\right]\right)\right\}\right] \\
& \leq \mathbb{E}_t\left[\operatorname { e x p } \left\{2 p \lambda e ^ { \beta A_T} \left(|\xi|+\chi^{(m, q)}+\gamma_1 \sup _{s \in[t, T]}\left|\delta_\theta Y_s^{(m-1, q)}\right|\right.\right.\right. \\
&\quad \left.\left.\left.+\left(\gamma_2+\beta(A_T-A_t)\right) \sup _{s \in[t, T]} \mathbb{E}\left[\left|\delta_\theta Y_s^{(m-1, q)}\right|\right]\right)\right\}\right]^{\frac{1}{2}} \mathbb{E}_t\left[\exp \left\{4 p \lambda e^{2 \beta A_T} \sup _{s \in[t, T]}\left|y_s^{(m), \tau}\right|\right\}\right]^{\frac{1}{2}} .
\end{aligned}
$$
Recalling (\ref{eq estimate y}) and using Doob's maximal inequality, we conclude that for each $p \geq 2$ and $t \in[0, T]$
$$
\mathbb{E}_t\left[\exp \left\{p \lambda \sup _{s \in[t, T]}\left|y_s^{(m), \tau}\right|\right\}\right] \vee \mathbb{E}_t\left[\exp \left\{p \lambda \sup _{s \in[t, T]}\left|y_s^{(m+q), \tau}\right|\right\}\right] \leq 4 \mathbb{E}_t\left[\exp \left\{p \lambda \zeta^{(m, q)}\right\}\right], \quad \forall m, q \geq 1 .
$$
It follows from (\ref{eq representation}) that
$$
\begin{aligned}
& \exp \left\{p \lambda\left(\delta_\theta Y_t^{(m, q)}\right)^{+}\right\} \leq \underset{\tau \in \mathcal{T}_t}{\operatorname{ess} \sup } \exp \left\{p \lambda\left(\delta_\theta y_t^{(m, q), \tau}\right)^{+}\right\} \\
& \leq 4 \mathbb{E}_t\left[\exp \left\{2 p \lambda e^{\beta A_T}\left(|\xi|+\chi^{(m, q)}+\gamma_1 \sup _{s \in[t, T]}\left|\delta_\theta Y_s^{(m-1, q)}\right|+\left(\gamma_2+\beta(A_T-A_t)\right) \sup _{s \in[t, T]} \mathbb{E}\left[\left|\delta_\theta Y_s^{(m-1, q)}\right|\right]\right)\right\}\right]^{\frac{1}{2}} \\
& \quad \times \mathbb{E}_t\left[\exp \left\{4 p \lambda e^{2 \beta A_T} \zeta^{(m, q)}\right\}\right]^{\frac{1}{2}} .
\end{aligned}
$$
Using a similar method, we derive that
$$
\begin{aligned}
& \exp \left\{p \lambda\left(\delta_\theta \widetilde{Y}_t^{(m, q)}\right)^{+}\right\} \\
& \leq 4 \mathbb{E}_t\left[\exp \left\{2 p \lambda e^{\beta A_T}\left(|\xi|+\chi^{(m, q)}+\gamma_1 \sup _{s \in[t, T]}\left|\delta_\theta \widetilde{Y}_s^{(m-1, q)}\right|+\left(\gamma_2+\beta(A_T-A_t)\right) \sup _{s \in[t, T]} \mathbb{E}\left[\left|\delta_\theta \widetilde{Y}_s^{(m-1, q)}\right|\right]\right)\right\}\right]^{\frac{1}{2}} \\
& \quad \times \mathbb{E}_t\left[\exp \left\{4 p \lambda e^{2 \beta A_T} \zeta^{(m, q)}\right\}\right]^{\frac{1}{2}} .
\end{aligned}
$$
According to the fact that
$$
\left(\delta_\theta Y^{(m, q)}\right)^{-} \leq\left(\delta_\theta \widetilde{Y}^{(m, q)}\right)^{+}+2\left|Y^{(m)}\right| \text { and }\left(\delta_\theta \widetilde{Y}^{(m, q)}\right)^{-} \leq\left(\delta_\theta Y^{(m, q)}\right)^{+}+2\left|Y^{(m+q)}\right|,
$$
we deduce that
$$
\begin{aligned}
& \exp \left\{p \lambda\left|\delta_\theta Y_t^{(m, q)}\right|\right\} \vee \exp \left\{p \lambda\left|\delta_\theta \widetilde{Y}_t^{(m, q)}\right|\right\} \\
& \leq \exp \left\{p \lambda\left(\left(\delta_\theta Y_t^{(m, q)}\right)^{+}+\left(\delta_\theta \widetilde{Y}_t^{(m, q)}\right)^{+}+2\left|Y_t^{(m)}\right|+2\left|Y_t^{(m+q)}\right|\right)\right\} \\
& \leq 4^2 \mathbb{E}_t\left[\exp \left\{2 p \lambda e^{\beta A_T}\left(|\xi|+\tilde{\chi}^{(m, q)}+\gamma_1 \sup _{s \in[t, T]} \delta_\theta \bar{Y}_s^{(m-1, q)}+\left(\gamma_2+\beta(A_T-A_t)\right) \sup _{s \in[t, T]} \mathbb{E}\left[\delta_\theta \bar{Y}_s^{(m-1, q)}\right]\right)\right\}\right] \\
& \quad \times \mathbb{E}_t\left[\exp \left\{4 p \lambda e^{2 \beta A_T} \zeta^{(m, q)}\right\}\right] .
\end{aligned}
$$
Applying Doob's maximal inequality and Hölder's inequality, we get that for each $p>1$ and $t \in[0, T]$
$$
\begin{aligned}
& \mathbb{E}\left[\exp \left\{p \lambda \sup _{s \in[t, T]} \delta_\theta \bar{Y}_s^{(m, q)}\right\}\right] \\
& \leq 4^5 \mathbb{E}\left[\exp \left\{4 p \lambda e^{\beta A_T} \widetilde{\nu}\left(|\xi|+\widetilde{\chi}^{(m, q)}+\gamma_1 \sup _{s \in[t, T]} \delta_\theta \bar{Y}_s^{(m-1, q)}+\left(\gamma_2+\beta(A_T-A_t)\right) \sup _{s \in[t, T]} \mathbb{E}\left[\delta_\theta \bar{Y}_s^{(m-1, q)}\right]\right)\right\}\right]^{\frac{1}{\nu}} \\
& \quad \times \mathbb{E}\left[\exp \left\{\frac{4 \widetilde{\nu}}{\widetilde{\nu}-1} p \lambda e^{2 \beta A_T} \zeta^{(m, q)}\right\}\right]^{\frac{\tilde{\tilde{\nu}}-1}{\nu}} .
\end{aligned}
$$
Recalling the definitions of $h, \nu$ and $\widetilde{\nu}$, we have
$$
\begin{aligned}
& \mathbb{E}\left[\exp \left\{p \lambda \sup _{s \in[T-h, T]} \delta_\theta \bar{Y}_s^{(m, q)}\right\}\right] \\
& \leq 4^5 \mathbb{E}\left[\exp \left\{\frac{8 \nu \widetilde{\nu} p \lambda}{\nu-1} e^{\beta A_T}|\xi|\right\}\right]^{\frac{\nu-1}{2 \nu \nu}} \mathbb{E}\left[\exp \left\{\frac{8 \nu \widetilde{\nu} p \lambda}{\nu-1} e^{\beta A_T} \widetilde{\chi}^{(m, q)}\right\}\right]^{\frac{\nu-1}{2 \nu \nu}} \mathbb{E}\left[\exp \left\{\frac{4 \widetilde{\nu}}{\widetilde{\nu}-1} p \lambda e^{2 \beta A_T} \zeta^{(m, q)}\right\}\right]^{\frac{\tilde{\nu}-1}{\nu}} \\
& \quad \times \mathbb{E}\left[\exp \left\{p \lambda \sup _{s \in[T-h, T]} \delta_\theta \bar{Y}_s^{(m-1, q)}\right\}\right]^{4 e^{\beta A_T}\left(\gamma_1+\gamma_2+\beta \|A_{ih}-A_{(i-1)h}\|_\infty\right)} . \\
&
\end{aligned}
$$
Set $\widetilde{\rho}=\frac{1}{1-4 e^{\beta A_T}\left(\gamma_1+\gamma_2+\beta \|A_{ih}-A_{(i-1)h}\|_\infty\right)}$. If $\mu=1$, for each $p \geq 1$ and $m, q \geq 1$
$$
\begin{aligned}
& \mathbb{E}\left[\exp \left\{p \lambda \sup _{s \in[0, T]} \delta_\theta \bar{Y}_s^{(m, q)}\right\}\right] \\
& \leq 4^{5 \tilde{\rho}} \mathbb{E}\left[\exp \left\{\frac{8 \nu \widetilde{\nu} p \lambda}{\nu-1} e^{\beta A_T}|\xi|\right\}\right]^{\frac{\tilde{\rho}}{2}} \sup _{m, q \geq 1} \mathbb{E}\left[\exp \left\{\frac{8 \nu \widetilde{\nu} p \lambda}{\nu-1} e^{\beta A_T} \widetilde{\chi}^{(m, q)}\right\}\right]^{\frac{\tilde{\tilde{p}}}{2}} \sup _{m, q \geq 1} \mathbb{E}\left[\exp \left\{\frac{4 \widetilde{\nu}}{\widetilde{\nu}-1} p \lambda e^{2 \beta A_T} \zeta^{(m, q)}\right\}\right]^{\tilde{\rho}} \\
& \quad \times \mathbb{E}\left[\exp \left\{p \lambda \sup _{s \in[0, T]} \delta_\theta \bar{Y}_s^{(1, q)}\right\}\right]^{e^{(m-1) \beta A_T}\left(4 \gamma_1+4 \gamma_2+4 \beta \|A_{ih}-A_{(i-1)h}\|_\infty\right)^{m-1}} .
\end{aligned}
$$
Applying Lemma \ref{estimate 1}, we have for any $\theta \in(0,1)$
$$
\lim _{m \rightarrow \infty} \sup _{q \geq 1} \mathbb{E}\left[\exp \left\{p \lambda \sup _{s \in[0, T]} \delta_\theta \bar{Y}_s^{(1, q)}\right\}\right]^{e^{(m-1) \beta A_T}\left(4 \gamma_1+4 \gamma_2+4 \beta \|A_{ih}-A_{(i-1)h}\|_\infty\right)^{m-1}}=1.
$$
It follows that
$$
\begin{aligned}
& \sup _{\theta \in(0,1)} \lim _{m \rightarrow \infty} \sup _{q \geq 1} \mathbb{E}\left[\exp \left\{p \lambda \sup _{s \in[0, T]} \delta_\theta \bar{Y}_s^{(m, q)}\right\}\right] \\
& \leq 4^{5 \widetilde{\rho}} \mathbb{E}\left[\exp \left\{\frac{8 \nu \widetilde{\nu} p \lambda}{\nu-1} e^{\beta A_T}|\xi|\right\}\right]^{\frac{\tilde{\rho}}{2}} \sup _{m, q \geq 1} \mathbb{E}\left[\exp \left\{\frac{8 \nu \widetilde{\nu} p \lambda}{\nu-1} e^{\beta A_T} \widetilde{\chi}^{(m, q)}\right\}\right]^{\frac{\tilde{\rho}}{2}} \\
& \quad \times \sup _{m, q \geq 1} \mathbb{E}\left[\exp \left\{\frac{4 \widetilde{\nu}}{\widetilde{\nu}-1} p \lambda e^{2 \beta A_T} \zeta^{(m, q)}\right\}\right]^{\widetilde{\rho}}<\infty . \\
&
\end{aligned}
$$
If $\mu=2$, proceeding identically as to derive (\ref{eq p leq 2}), we have for any $p \geq 1$
$$
\begin{aligned}
& \mathbb{E}\left[\exp \left\{p \lambda \sup _{s \in[0, T]} \delta_\theta \bar{Y}_s^{(m, q)}\right\}\right] \\
& \leq 4^{5 \widetilde{\rho}+\frac{5 \tilde{\rho}^2}{4}} \mathbb{E}\left[\exp \left\{\left(\frac{8 \nu \widetilde{\nu} e^{\beta A_T}}{\nu-1}\right)^2 2 p \lambda|\xi|\right\}\right]^{\frac{\tilde{\rho}}{4}+\frac{\tilde{\rho}^2}{8}} \sup _{m, q \geq 1} \mathbb{E}\left[\exp \left\{\left(\frac{8 \nu \widetilde{\nu} e^{\beta A_T}}{\nu-1}\right)^2 2 p \lambda \widetilde{\chi}^{(m, q)}\right\}\right]^{\frac{\tilde{\rho}}{2}+\frac{\tilde{\rho}^2}{8}} \\
& \quad \times \sup _{m, q \geq 1} \mathbb{E}\left[\exp \left\{\frac{32 \nu \widetilde{\nu}^2 e^{\beta A_T}}{(\widetilde{\nu}-1)(\nu-1)} p \lambda e^{2 \beta A_T} \zeta^{(m, q)}\right\}\right]^{\widetilde{\rho}+\frac{\tilde{\rho}^2}{4}} \\
& \quad \times \mathbb{E}\left[\exp \left\{\frac{8 \nu \widetilde{\nu} e^{\beta A_T}}{\nu-1} p \lambda \sup _{s \in[0, T]}\left|\delta_\theta \bar{Y}_s^{(1, q)}\right|\right\}\right]^{\left(1+\frac{\tilde{\rho}}{4}\right) e^{(m-1) \beta A_T}\left(4 \gamma_1+4 \gamma_2+4 \beta \|A_{ih}-A_{(i-1)h}\|_\infty\right)^{m-1}},
\end{aligned}
$$
which also implies the desired assertion in this case. Iterating the above procedure $\mu$ times in the general case, we get the desired result.
\end{proof}

\end{appendices}

\bibliographystyle{plain}
\bibliography{RBSDEJ}

\end{document}